\date{\today}
\theoremstyle{plain}
\newtheorem{theorem}[equation]{Theorem}
\newtheorem{prop}[equation]{Proposition}
\newtheorem{cor}[equation]{Corollary}
\theoremstyle{definition}
\newtheorem{ex}[equation]{Example}
\newtheorem{remark}[equation]{Remark}
\theoremstyle{remark}
\theoremstyle{remarks}
\theoremstyle{definition}
\theoremstyle{remark}
\numberwithin{equation}{section}
\numberwithin{thm}{section}
\newcommand{\N}{\mathbb{N}}
\newcommand{\A}{\mathfrak{A}}
\newcommand{\SUM}{\sum_{k=1}^\infty}
\begin{document}

\title[quotient algebra $\mathcal{K}(X)/\mathcal{A}(X)$]{Quotient algebra 
of compact-by-approximable operators on Banach spaces failing the approximation property}

\author[Hans-Olav Tylli and Henrik Wirzenius]{Hans-Olav Tylli and Henrik Wirzenius}
\address{Tylli: Department of Mathematics and Statistics, Box 68,
FI-00014 University of Helsinki, Finland}
\email{hans-olav.tylli@helsinki.fi}
\address{Wirzenius: Department of Mathematics and Statistics, Box 68,
FI-00014 University of Helsinki, Finland}
\email{henrik.wirzenius@helsinki.fi}

\subjclass[2010]{46B28, 47L10}
\keywords{Quotient algebra, compact-by-approximable operators, approximation properties}

\begin{abstract}
We initiate a study of structural properties of the quotient algebra 
$\mathcal K(X)/\mathcal A(X)$ of the compact-by-approximable operators  
on Banach spaces $X$ failing the approximation property. Our main results and examples 
include the following: 
(i) there is a linear isomorphic embedding from $c_0$ into $\mathcal K(Z)/\mathcal A(Z)$,
where $Z$ belongs to the class of Banach spaces constructed by Willis 
that have the metric compact approximation
property but fail the approximation property, (ii) there is a linear isomorphic embedding
from a non-separable space $c_0(\Gamma)$ into $\mathcal K(Z_{FJ})/\mathcal A(Z_{FJ})$, 
where $Z_{FJ}$ is a universal compact factorisation space arising from the work of 
Johnson and Figiel.
\end{abstract}

\maketitle

\section{Introduction}\label{intro}

For Banach spaces  $X$ and $Y$ let $\mathcal{K}(X,Y)$ 
be the Banach space of compact operators $X \to Y$ in the uniform operator norm, 
and denote by $\mathcal{A}(X,Y) = \overline{\mathcal{F}(X,Y)}$ the closed subspace consisting of the approximable operators $X \to Y$. 
Here $\mathcal{F}(X,Y)$ is the linear subspace consisting 
of the bounded finite rank operators $X \to Y$.
We put $\mathcal K(X) = \mathcal K(X,X)$ and 
$\mathcal{A}(X) =  \mathcal{A}(X,X)$ for $Y = X$, so that
 $\mathcal{A}(X) \subset  \mathcal{K}(X)$ are closed two-sided ideals of the Banach algebra 
 $\mathcal L(X)$ of the bounded operators on $X$. Consequently
 the quotient algebra $\mathfrak A_X := \mathcal K(X)/\mathcal A(X)$ 
 is a non-unital Banach algebra 
 equipped with the quotient norm
\[
\Vert T+ \mathcal A(X) \Vert = dist(T,\mathcal A(X))=\inf_{S\in \mathcal A(X)}\Vert T-S\Vert .
 \]

It is well known that $\mathcal{A}(X) = \mathcal{K}(X)$ whenever $X$ has the 
approximation property, so that the quotient $\mathfrak A_X$ 
can only be non-zero within the class of  Banach spaces $X$ that fail to have the approximation property. It remains a longstanding problem, see \cite[Problem 1.e.9]{LT1} or 
\cite[Problem 2.7]{C}, whether conversely $\mathcal{A}(X) = \mathcal{K}(X)$ will imply that
$X$ has the approximation property. 
Properties of the quotient algebras $\mathfrak A_X$ are rather
elusive and inaccessible, not least because it is a
non-trivial task to construct examples of Banach spaces failing the approximation property,
and they have not been much studied.
Recently Dales \cite{D13} revived the interest in various questions about the size 
and algebraic structure of 
$\mathfrak A_X$ by collecting results and highlighting a number of problems.
It is known, see \cite[2.5.8(iv)]{D00}, that $\mathfrak A_X$ 
 is a radical Banach algebra for any Banach space $X$, that is, any quotient element
 $S + \mathcal{A}(X)$ is quasi-nilpotent in $\mathfrak A_X$. 
 Consequently, from an algebraic perspective the non-trivial 
 quotient algebras $\mathfrak A_X$  are natural (non-commutative)  
 radical Banach algebras whose structure is very poorly understood.

It was shown by Bachelis \cite{B} that if 
$E$ is a Banach space which has  the bounded approximation property
and $E$ contains a closed linear subspace $X$ which fails the approximation property,
then $E$ contains a closed linear subspace $Y$ such that 
$\mathcal{A}(Y, X) \varsubsetneq \mathcal{K}(Y, X)$.
Moreover, if in addition $E \oplus E$ is isomorphic to $E$, 
then $E$ contains the closed linear subspace $Z = Y \oplus X$ for which
$\mathcal{A}(Z) \varsubsetneq \mathcal{K}(Z)$.
This extended an earlier result due to Alexander \cite{A}
for the sequence spaces $E = \ell^p$ with $2 < p < \infty$.

In this paper we show that 
in fact the quotient algebra $\mathfrak A_X$ is  infinite-dimensional 
for many Banach spaces $X$. Our results mostly
draw on techniques from Banach space theory, which
provide less information about the algebraic structure of $\mathfrak A_X$.
In section \ref{general} we review for our purposes two general constructions, which yield
that the quotient algebra $\mathfrak A_X$ is infinite-dimensional for certain classes 
of Banach spaces. 
In section \ref{Willis} we first observe that $\mathfrak A_X$ is infinite-dimensional 
for the class of Banach spaces $X$, where $X$ has the bounded  compact approximation 
property but fails the approximation property. The first examples of such Banach spaces were constructed by Willis \cite{W}, and our main result demonstrates 
more precisely that $\mathfrak A_Z$ always contains a linear isomorphic copy of the 
sequence space $c_0$ for the spaces $Z$ from \cite{W}.
However, the linear embedding $c_0 \to \mathfrak A_Z$ does not 
preserve much of the algebraic structure.
In section \ref{further} we further show that $\mathfrak A_X$  is infinite-dimensional 
for two different universal spaces $X$. In the first example 
$X = C_1^*$, where $C_1$ is the complementably universal separable conjugate space 
constructed by Johnson \cite{J72}. In the second example  we show that there is a linear isomorphic embedding of a non-separable sequence space $c_0(\Gamma)$ into  
$\mathfrak A_{Z_{FJ}}$,  where $Z_{FJ}$ is a universal compact factorisation space 
suggested by results of Johnson \cite{J71} and Figiel \cite{F}. 

Recently there has been significant advances in the explicit classification of the
lattice of closed two-sided ideals $I \subset \mathcal L(X)$ for some classical Banach spaces $X$,
see e.g. \cite{SZ}, \cite{FSZ} and their references. Moreover, 
the Calkin algebra $\mathcal L(X)/\mathcal K(X)$ has been explicitly determined 
for special Banach spaces $X$, see \cite{AH}, \cite{Td}, \cite{MPZ} and their references. 
However, the preceding results concern Banach spaces having a Schauder basis, or at least
the bounded approximation property, which are  disjoint classes of spaces from
those relevant for the study of the quotients $\mathfrak A_X$. 

\section{General constructions}\label{general}

In this section we review for our subsequent use two 
direct sum constructions (perhaps known in some form) that produce Banach spaces $Z$ 
for which the quotient algebra $\mathfrak A_Z$ is infinite-dimensional, by starting 
from any Banach space $X$ that fails the approximation property. The drawback 
is that in general $X$ only embeds as a complemented subspace in $Z$,
where $Z$ is typically quite different from $X$. We also discuss 
the algebraic relevance of these constructions. 

We first recall the various approximation properties of Banach spaces 
that will be required. 
The Banach space  $X$ has the approximation property (A.P. in short) 
if for any compact subset $K\subset X$ and any $\varepsilon>0$ 
there is a finite rank operator $T\in \mathcal F(X)$ such that
\begin{equation}\label{ap0}
\sup_{x\in K} \Vert x-Tx \Vert \leq\varepsilon. 
\end{equation}
If one instead allows approximation by $T \in \mathcal{K}(X)$ in condition \eqref{ap0}, then
$X$ is said to have the compact approximation property (C.A.P.).
 Moreover, $X$ has the bounded approximation property (B.A.P.),
 respectively the bounded compact approximation property (B.C.A.P.), if there is a  
 uniform constant $M < \infty$ so that  the approximating operator $T$ from \eqref{ap0} 
 can be chosen to satisfy $\Vert T\Vert \leq M$.
Finally, $X$ has the metric C.A.P. if above $M = 1$.
 
 We refer e.g. to \cite[1.e]{LT1} and the survey \cite{C} for useful general information 
 about these approximation properties. 
 Recall that any Banach space with a Schauder basis has the B.A.P., and 
 that the first example of a Banach space without the A.P. 
 was constructed by Enflo \cite{E}.
The references \cite[2.d]{LT1}, \cite[1.g]{LT2} and \cite[10.4]{Pie}
contain the detailed constructions
of some Banach spaces failing the A.P.
Presently the space $\mathcal L(\ell^2)$ of the bounded operators on the Hilbert space
$\ell^2$ is the most explicit space  
known to fail the A.P. by a result of Szankowski \cite{Sz81}.
Moreover, the Calkin algebra $\mathcal L(\ell^2)/\mathcal K(\ell^2)$ also fails the
A.P. \cite{GS}.

To motivate our first construction we recall a classical fact due to
Grothendieck. If the Banach space $X$ fails to have the A.P.,
then there is a linear subspace $Y \subset X$ and a complete norm $\vert \cdot \vert$
on $Y$, such that the inclusion map  $J:  (Y,\vert \cdot \vert) \to X$ 
is a compact non-approximable operator. For this fact see e.g. 
the argument for the implication (v) $\Rightarrow$ (i) in 
\cite[Thm. 1.e.4]{LT1}. It easily follows  that 
$\mathcal A(X \oplus Y) \varsubsetneq \mathcal K(X \oplus Y)$,
where $Y = (Y,\vert \cdot \vert)$. 

This fact can be generalised in many ways, and the
following construction implies that  $\mathfrak A_Z$ is infinite-dimensional, where 
e.g.  $Z$ is the countable direct $\ell^p$-sum
$X \oplus \big(\oplus_{n\in \mathbb N} Y)_{\ell^p}$, and $X$ and $Y$ are as above. 
Here $1 \le p \le \infty$,
where for  notational unity the case $p = \infty$ 
denotes  a direct $c_0$-sum  equipped with the
supremum norm. 

\begin{prop}\label{sum1}
Suppose that $X$ and $Y_n$ ($n \in \mathbb N$) are Banach spaces such that 
one of the following conditions holds:
\begin{equation}\tag{i}
\mathcal A(Y_n,X) \varsubsetneq \mathcal K(Y_n,X), \quad n \in \mathbb N,
\end{equation}
\begin{equation}\tag{ii}
\mathcal A(X,Y_n) \varsubsetneq \mathcal K(X,Y_n), \quad n \in \mathbb N.
\end{equation}
Let $Z = X \oplus \big(\oplus_{n\in \mathbb N} Y_n\big)_{\ell^p}$ for $1 \le p \le \infty$. Then  
the quotient $\mathfrak A_Z$ is infinite-dimensional. 
\end{prop}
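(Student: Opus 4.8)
The plan is to exhibit an explicit infinite linearly independent family in $\mathfrak A_Z$ coming from the given non-approximable compact operators on the summands $Y_n$, assembled diagonally through the natural coordinate inclusions and projections. Let me think about how to construct these and, crucially, how to prove they stay linearly independent modulo $\mathcal A(Z)$.

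Let me set up notation. We have $Z = X \oplus (\oplus_n Y_n)_{\ell^p}$. Write $P_X : Z \to X$ for the canonical projection and $i_X : X \to Z$ for the inclusion; similarly $P_n : Z \to Y_n$ and $i_n : Y_n \to Z$ for the $n$-th summand. These are all bounded and $P_n i_m = \delta_{nm} \mathrm{Id}$, $P_X i_n = 0$, etc.

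In case (i), for each $n$ pick $T_n \in \mathcal K(Y_n, X) \setminus \mathcal A(Y_n, X)$. Form $S_n := i_X T_n P_n \in \mathcal K(Z)$. This is compact (composition with a compact map) and I claim $S_n \notin \mathcal A(Z)$: if $S_n$ were approximable on $Z$, then $P_X S_n i_n = T_n$ would be approximable in $\mathcal A(Y_n, X)$ (since $\mathcal A$ is an ideal and is stable under composition with bounded operators on both sides), contradicting the choice of $T_n$. So each $S_n + \mathcal A(Z) \ne 0$.

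The linear independence is the heart of the matter, and it's where I should be careful. Suppose $\sum_{n=1}^N \alpha_n S_n \in \mathcal A(Z)$ for scalars $\alpha_n$. Apply the ideal property: compose on the left with $P_X$ and on the right with $i_m$ for a fixed $m \le N$. Since $P_X S_n i_m = T_n \cdot (P_n i_m) = \delta_{nm} T_m$, I get $P_X (\sum_n \alpha_n S_n) i_m = \alpha_m T_m \in \mathcal A(Y_m, X)$. Because $T_m \notin \mathcal A(Y_m, X)$, this forces $\alpha_m = 0$. As $m$ was arbitrary, all $\alpha_m = 0$, giving linear independence of $\{S_n + \mathcal A(Z)\}_{n \in \mathbb N}$ in $\mathfrak A_Z$, hence $\dim \mathfrak A_Z = \infty$. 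Case (ii) is entirely symmetric: pick $T_n \in \mathcal K(X, Y_n) \setminus \mathcal A(X, Y_n)$, set $S_n := i_n T_n P_X$, and recover $\alpha_m T_m = P_m (\sum_n \alpha_n S_n) i_X$ by the same sandwiching.

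The main obstacle — and the step to state carefully — is that the single compactness ingredient I am using is genuinely available, namely that a bounded-operator sandwich $A \mapsto BAC$ maps $\mathcal A$ into $\mathcal A$ and $\mathcal K$ into $\mathcal K$ (the standard ideal property of these classes), together with the fact that the coordinate maps $P_X, P_n, i_X, i_n$ are bounded for every $1 \le p \le \infty$ uniformly in the sense needed; this is immediate from the definition of the $\ell^p$- (or $c_0$-) direct sum norm, so the argument is uniform in $p$. I should remark that this yields only a linear (not subalgebra) copy, consistent with the paper's stated caveat, since the products $S_n S_{m}$ telescope through $P_X i_X = 0$ (case (i)) or $P_X i_n = 0$ and need not behave multiplicatively. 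Thus the only genuine verification is that $S_n$ is compact but non-approximable and that the $\delta_{nm}$ collapse isolates each coefficient; there is no delicate estimate beyond the boundedness of the coordinate projections and inclusions.
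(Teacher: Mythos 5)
Your proof is correct and follows essentially the same route as the paper's: you build the identical operators $i_X T_n P_n$ (resp.\ $i_n T_n P_X$) and use the identical sandwiching $P_X(\,\cdot\,)i_m$ to isolate $\delta_{nm}T_m$, the only difference being that the paper first normalizes $\operatorname{dist}(T_n,\mathcal A(Y_n,X))=1$ and deduces the quantitative separation $\operatorname{dist}(S_n-S_m,\mathcal A(Z))\ge 1$ (a bounded sequence with no convergent subsequence), whereas you settle for bare linear independence of the cosets, which suffices for the statement. One cosmetic slip in your closing aside: the products $S_nS_m$ vanish because $P_n i_X=0$, not because ``$P_X i_X=0$'' (in fact $P_X i_X=\mathrm{Id}_X$); this does not affect the argument.
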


\begin{proof}
Let $P_0: Z \to X$ and $P_n: Z \to Y_n$ be the natural projections, respectively
$J_0: X \to Z$ and $J_n: Y_n \to Z$ the corresponding inclusion maps
for $n \in \mathbb N$. 

Suppose that (i) holds.  We may after a normalisation find compact operators
$S_n \in \mathcal K(Y_n,X)$ so that
\[
dist(S_n,\mathcal A(Y_n,X)) = 1, \quad n \in \mathbb N.
\]
Consider the compact operators $T_n = J_0S_nP_n \in \mathcal K(Z)$ 
for $n \in \mathbb N$. We claim that
\begin{equation}\label{diff}
dist(T_n - T_m,\mathcal A(Z)) \ge 1
\end{equation}
for any $n, m \in \mathbb N$ with $n \neq m$. In particular, \eqref{diff}  says that
$(T_n + \mathcal A(Z)) \subset \mathfrak A_Z$ 
is a bounded sequence which 
does not have any convergent subsequences, 
whence   $\mathfrak A_Z$  is infinite-dimensional.

Towards the desired estimate suppose that $V \in \mathcal A(Z)$ is arbitrary. Then
$P_0VJ_n \in \mathcal A(Y_n,X)$, so that for $m \neq n$ one gets that
\begin{align*}
1 = \  & dist(S_n,\mathcal A(Y_n,X)) \le \Vert S_n - P_0VJ_n\Vert \\
= \ & \Vert P_0(T_n - T_m - V)J_n\Vert \le \Vert T_n - T_m - V \Vert,
\end{align*}
since $T_mJ_n = J_0S_mP_mJ_n = 0$ above. This implies that
\eqref{diff} holds.

If condition (ii) holds, then there are  $R_n \in \mathcal K(X,Y_n)$ so that
$dist(R_n,\mathcal A(X,Y_n)) = 1$
for $n \in \mathbb N$. Let $U_n = J_nR_nP_0 \in \mathcal K(Z)$ for $n \in \mathbb N$. 
A straightforward modification of the estimate for  \eqref{diff} shows that 
also in this case
$dist(U_n - U_m,\mathcal A(Z)) \ge 1$
for $n \neq m$. This completes the argument.
\end{proof}

Of course, the argument in Proposition \ref{sum1} also shows that
$\mathcal K(Y,X)/ \mathcal A(Y,X)$, respectively $\mathcal K(X,Y)/ \mathcal A(X,Y)$,
are  infinite-dimensional quotient spaces  for $Y = \big(\oplus_{n\in \mathbb N} Y_n\big)_{\ell^p}$.
However, our focus is on the  quotient algebras $\mathfrak A_X$,
and it must be kept in mind that it is much more difficult  to study  
$\mathfrak A_X$ for a \textit{given} Banach space $X$. 
Recently K\"ursten and Pietsch \cite{KP} 
produced various examples of compact non-approximable
operators as the central part of the canonical factorisation
$\ell^1 \to \ell^1/Ker(T) \to \overline{T(\ell^1)} \to c_0$ for certain operators 
$T \in \mathcal L(\ell^1,c_0)$,
but such methods are less adapted to specific quotient algebras.
 
We will later apply the following variant of Proposition \ref{sum1}, 
which contains more precise information. In conditions (iii) and (iv) below
the $\ell^p$-sum is replaced by the supremum norm and a $c_0$-condition for $p = \infty$.

\begin{prop}\label{sum2}
Suppose that $X$ is a Banach space having the following properties:

(i) There is a uniformly bounded sequence $(P_n) \subset \mathcal L(X)$ of 
projections on $X$ so that  $P_n P_m=0$ whenever $n\neq m$,

(ii)  $\mathfrak A_{X_n}\neq \{0\}$ for every $n\in\N$, where $X_n=P_n X$.

\noindent Then $\mathfrak A_X$ is infinite-dimensional. 

Suppose in addition that 
there is $1 \le p \le \infty$ and constants $C, D < \infty$ so that

(iii) $\Vert \sum_{n=1}^\infty x_n\Vert \le 
C \cdot \big(\sum_{n=1}^\infty \Vert x_n \Vert^p \big)^{1/p}$\\
whenever $x_n \in P_nX$ for $n \in \mathbb N$ and 
$\sum_{n=1}^\infty \Vert x_n\Vert^p < \infty$, 

(iv) $\big(\sum_{n=1}^\infty \Vert P_nx \Vert^p \big)^{1/p} 
\le D \Vert x \Vert$ for  $x \in X$.

\smallskip

Then there is a linear embedding $c_0 \to \A_X$, that is, 
$\A_X$ contains a closed subspace which is linearly isomorphic 
to the sequence space $c_0$.
\end{prop}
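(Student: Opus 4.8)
The plan is to construct an explicit linear embedding $\Phi : c_0 \to \mathfrak A_X$ built from the ``diagonal blocks'' $X_n = P_n X$, and then to verify that $\Phi$ is bounded above and bounded below; the lower bound is the decisive point, and it is what forces the embedded copy to be $c_0$ rather than some $\ell^p$. First I would invoke hypothesis (ii) to fix, for each $n$, a compact operator $S_n \in \mathcal K(X_n)$ with $\mathrm{dist}(S_n, \mathcal A(X_n)) = 1$; after subtracting a suitable approximable operator I may additionally assume $\|S_n\| \le 2$ without changing this distance, so that the $S_n$ are uniformly bounded. Writing $J_n : X_n \to X$ for the isometric inclusion and retaining the projections $P_n$ from (i), I set $T_n = J_n S_n P_n \in \mathcal K(X)$ and define, for $a = (a_n) \in c_0$,
\[
\Phi(a) = \sum_{n=1}^\infty a_n T_n + \mathcal A(X).
\]

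The second step is convergence together with the upper estimate. For a finite set $F$ and $x \in X$ the vectors $a_n S_n P_n x$ lie in the distinct blocks $P_n X$, so hypothesis (iii) applies to $\sum_{n \in F} a_n T_n x = \sum_{n \in F} J_n(a_n S_n P_n x)$; bounding $\|S_n P_n x\| \le 2\|P_n x\|$, factoring $\max_{n \in F}|a_n|$ out of the resulting $\ell^p$-sum, and finally applying (iv) yields $\|\sum_{n \in F} a_n T_n\| \le 2CD \max_{n \in F}|a_n|$. This shows the partial sums are Cauchy whenever $a \in c_0$, so $\sum a_n T_n$ converges in $\mathcal K(X)$ and $\Phi$ is a well-defined bounded operator with $\|\Phi(a)\| \le 2CD\,\|a\|_\infty$. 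For $p = \infty$ the same computation goes through verbatim with suprema replacing the $\ell^p$-sums.

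The heart of the matter, and the step I expect to be the main obstacle, is the lower bound. Here I would exploit the disjointness $P_n P_m = 0$: since $P_n J_m = 0$ for $n \neq m$ while $P_m J_m = \mathrm{id}_{X_m}$, one computes $P_m T_n J_m = \delta_{nm} S_m$, so that for every $V \in \mathcal A(X)$
\[
P_m\Bigl(\sum_n a_n T_n - V\Bigr) J_m = a_m S_m - P_m V J_m,
\]
where crucially $P_m V J_m \in \mathcal A(X_m)$ because $\mathcal A$ is an ideal. Taking norms, using $\|P_m\| \le K := \sup_n\|P_n\| < \infty$ and $\|J_m\| = 1$, and then the normalization $\mathrm{dist}(S_m, \mathcal A(X_m)) = 1$, gives
\[
\Bigl\|\sum_n a_n T_n - V\Bigr\| \ge \tfrac{1}{K}\,\|a_m S_m - P_m V J_m\| \ge \tfrac{1}{K}\,|a_m|.
\]
Since $V$ and $m$ are arbitrary, taking the infimum over $V \in \mathcal A(X)$ and the supremum over $m$ yields $\|\Phi(a)\| \ge K^{-1}\|a\|_\infty$.

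The reason this argument delivers $c_0$, and not a copy of $\ell^p$, is structural: the localization $P_m(\cdot)J_m$ only controls the coordinates one at a time, so the lower bound is inevitably a supremum-norm bound, matching the supremum-norm upper bound from step two. Combining the two estimates, $\Phi$ is an isomorphism onto its image, which is therefore a closed subspace of $\mathfrak A_X$ linearly isomorphic to $c_0$. The only genuinely delicate points to get right are the uniform control of $\|S_n\|$ (needed to make the upper estimate work with a single constant) and the bookkeeping showing $P_m V J_m$ remains approximable; everything else is a direct application of conditions (i)--(iv).
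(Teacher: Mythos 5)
Your proposal is correct and follows essentially the same route as the paper's proof: the same normalised block operators $T_n = J_nS_nP_n$ with $\mathrm{dist}(S_n,\mathcal A(X_n))=1$ and $\Vert S_n\Vert\le 2$, the same use of (iii) and (iv) for norm-convergence and the upper bound $2CD\Vert a\Vert_\infty$, and the same localisation $P_m(\cdot)J_m$ exploiting $P_nP_m=0$ for the lower bound $K^{-1}\Vert a\Vert_\infty$. The only point you leave implicit is the proposition's first assertion (infinite-dimensionality of $\mathfrak A_X$ under (i)--(ii) alone, without (iii)--(iv)), but this follows immediately from your lower-bound computation applied to the differences $T_n - T_m$, which is exactly how the paper handles it.
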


Note that  in  (iii) the condition implies that the series $\sum_{n=1}^\infty x_n$ 
converges in $X$.
Moreover,  (iii) is always valid with $C = 1$ for  $p = 1$, 
and (iv) follows from (i) for $p = \infty$. 
The typical application of Proposition \ref{sum2} is to direct $\ell^p$-sums 
$X = \big(\oplus_{n\in \mathbb N} X_n\big)_{\ell^p}$,
where  $\mathfrak A_{X_n}\neq \{0\}$ for every $n\in\N$,
see Corollary \ref{corsum2} below. However, 
it is not assumed here that the linear span of $\cup_{n\in \mathbb N} P_nX$ 
is dense in $X$, which will be relevant in Section \ref{Willis}.

\begin{proof}
Suppose first that $X$ satisfies conditions (i) and (ii), and put
$M = \sup_n \Vert P_n\Vert$.
From (i) we find, after a normalisation and possibly by passing to $S_n - V_n$
for some $V_n \in \mathcal A(X_n)$, compact operators
$S_n \in \mathcal K(X_n)$ so that 
\begin{equation}\label{bd1}
dist(S_n,\mathcal A(X_n)) = 1\quad \textrm{and}\quad \Vert S_n\Vert \le 2
\end{equation}
for  $n \in \mathbb N$.

Let $J_n: X_n \to X$ be the inclusion map for $X_n = P_nX$ and $n \in \mathbb N$.
Consider the compact operators $U_n = J_nS_nP_n \in \mathcal K(X)$ for 
$n \in \mathbb N$. By a straightforward modification of the estimates in Proposition
\ref{sum1} one obtains that 
\[
dist(U_n - U_m,\mathcal A(Z)) \ge 1/M
\]
for any $n, m \in \mathbb N$ with $n \neq m$. For this estimate one uses the fact that 
$P_nP_m = 0$ implies that
$P_nJ_m = 0$,  for $m \neq n$. Thus $\A_X$ is infinite-dimensional.

Suppose next that $X$ also satisfies conditions (iii) and (iv)  for $1 \le p < \infty$.
 Let $(a_k)\in c_0$ be arbitrary. 
 Observe that for any $m>n$ and $x \in X$ we get from (iii)  and (iv) that
 \begin{align*}
\Vert \sum_{k=1}^m a_kU_kx -\sum_{k=1}^n a_kU_kx\Vert = & 
\Vert  \sum_{k=n+1}^m a_kJ_kS_kP_kx \Vert 
\le  C \cdot \big(\sum_{k=n+1}^m  \vert a_k\vert^p \cdot \Vert S_kP_kx\Vert^p\big)^{1/p} \\
& \le \ 2C \cdot \sup_{n+1\leq k\leq m} \vert a_k\vert 
\cdot \big(\sum_{k=n+1}^m\Vert P_kx\Vert^p \big)^{1/p}\\
& \le 2CD \cdot  \sup_{n+1\leq k\leq m} \vert a_k\vert \cdot \Vert x\Vert,
 \end{align*}
which converges uniformly to $0$ as $n\to\infty$.   We conclude that 
$(\sum_{k=1}^n a_kU_k)_{n \in \mathbb N}$ is a Cauchy-sequence in 
$\mathcal K(X)$, so that 
$\sum_{k=1}^\infty a_k U_k \in\mathcal K(X)$ defines a compact operator
for any $(a_k)\in c_0$. Moreover, for $x \in X$ and $(a_k)\in c_0$  
we obtain similarly as above from (iii) and (iv) that
\[
\Vert \sum_{k=1}^\infty a_k U_kx \Vert \le 2CD \cdot   
\sup_{k \in \mathbb N} \vert a_k\vert \cdot \Vert x\Vert,
\]
so  that
 \begin{align*}
 dist(\SUM a_kU_k, \mathcal A(X)) \leq ||\SUM a_kU_k|| 
  \leq \ & 2CD \cdot||(a_k)||_\infty .
 \end{align*}
This means that 
$(a_k)\mapsto (\SUM a_kU_k) + \mathcal A(X)$ 
defines a bounded linear map $\psi: c_0 \to \A_X$. The argument for 
$p = \infty$ is similar.
 
We claim that $\psi$ is bounded from below for $1 \le p \le \infty$, 
so that $\psi$ defines a linear embedding of $c_0$ into $\A_X$. 
Towards this,  let $(a_k)\in c_0$ be arbitrary and pick  $r \in \mathbb N$ 
such that $||(a_k)||_\infty=|a_{r}|$.
Then for any $U\in \mathcal A(X)$ we get from $P_rJ_k = 0$ for 
$r \neq k$, that
\begin{align*}
M\cdot||\SUM a_kU_k-U||&\geq ||P_r(\SUM a_k U_k-U)J_r||=||P_r(\SUM a_k J_kS_kP_k-U)J_r||\\
&=||a_rS_r-P_rUJ_r||\geq ||a_r S_r+ \mathcal A(X_r)|| = |a_r|.
\end{align*}
It follows that 
$||\psi(a_k)||= dist(\sum_{k=1}^\infty a_k U_k, A(X)) \geq (1/M) ||(a_k)||_\infty$
for $(a_k)\in c_0$, which concludes the argument. 
\end{proof}

Note that in the above argument the series  
$\sum_{k=1}^\infty a_k U_k$ defines a bounded 
operator  on $X$ for any $(a_k) \in \ell^\infty$, but this operator will
usually not be compact.

In the sequel we will use the notation $Y \approx Z$ for linearly isomorphic spaces
$Y$ and $Z$.

\begin{cor}\label{corsum2}
Suppose that there is $1 \le p \le \infty$  such that one of the following conditions holds
(where the case $p = \infty$  refers below to a direct  $c_0$-sum):

(i) $X = \big(\oplus_{n\in \mathbb N} X_n\big)_{\ell^p}$,
where  $\mathfrak A_{X_n}\neq \{0\}$ for every $n\in \N$, or

(ii) $X$ is a Banach space so that  $\mathfrak A_{X}\neq \{0\}$ and 
$\big(\oplus_{\mathbb N} X)_{\ell^p} \approx X$.

\noindent Then $\A_X$ contains a linear isomorphic copy of $c_0$.
\end{cor}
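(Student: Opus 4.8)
The plan is to deduce both cases directly from Proposition \ref{sum2} by exhibiting the required projections and checking conditions (i)--(iv) for the $\ell^p$-sum structure.

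For case (i), I would take $P_n \colon X \to X$ to be the canonical coordinate projection onto the $n$-th summand $X_n$ in $X = \big(\oplus_m X_m\big)_{\ell^p}$. These satisfy $\Vert P_n\Vert = 1$ and $P_nP_m = 0$ for $n \neq m$, so condition (i) of Proposition \ref{sum2} holds with $M = 1$, while the hypothesis $\A_{X_n} \neq \{0\}$ is exactly condition (ii). For the metric conditions, the defining norm of the $\ell^p$-sum yields $\Vert \sum_n x_n\Vert = \big(\sum_n \Vert x_n\Vert^p\big)^{1/p}$ whenever $x_n \in P_nX$, and $\big(\sum_n \Vert P_nx\Vert^p\big)^{1/p} = \Vert x\Vert$ for every $x \in X$; hence (iii) and (iv) hold with $C = D = 1$, with the obvious supremum-norm modifications in the case $p = \infty$. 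Proposition \ref{sum2} then provides the desired linear embedding $c_0 \to \A_X$.

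For case (ii), I would first record that the quotient algebra $\A$ is an isomorphic invariant: if $\Phi \colon X \to W$ is a linear isomorphism, then the map $T \mapsto \Phi T \Phi^{-1}$ carries $\mathcal K(X)$ onto $\mathcal K(W)$ and $\mathcal A(X)$ onto $\mathcal A(W)$, since composition with isomorphisms preserves both finite rank and uniform limits. This induces a Banach-algebra isomorphism $\A_X \approx \A_W$. Setting $W = \big(\oplus_{\N} X\big)_{\ell^p}$ and applying case (i) with every summand equal to $X$ (so that each $\A_{X_n} = \A_X \neq \{0\}$) shows that $\A_W$ contains a linear isomorphic copy of $c_0$; transporting this back through $W \approx X$ yields the same conclusion for $\A_X$.

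Since the corollary is essentially a verification of the hypotheses of Proposition \ref{sum2}, I do not expect a substantive obstacle. The only points requiring care are the bookkeeping of the constants $M, C, D$ for the $\ell^p$-norm (all equal to $1$ here, so that no uniform boundedness fails) and, in case (ii), checking that the induced map on quotient algebras is genuinely a (bicontinuous) isomorphism rather than merely a bounded homomorphism, which is what licenses transferring the embedded copy of $c_0$ between $\A_W$ and $\A_X$.
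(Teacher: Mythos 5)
Your proposal is correct and follows exactly the route the paper intends: the paper states Corollary \ref{corsum2} without a separate proof precisely because it is the immediate application of Proposition \ref{sum2} to the canonical coordinate projections of the $\ell^p$-sum (with $M=C=D=1$), which is what you verify in case (i). Your reduction of case (ii) to case (i) via the similarity $T \mapsto \Phi T \Phi^{-1}$, which carries $\mathcal K(X)$ and $\mathcal A(X)$ onto $\mathcal K(W)$ and $\mathcal A(W)$ and so induces a bicontinuous isomorphism $\A_X \approx \A_W$, is likewise the standard and intended argument.
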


The basic examples are found in the following application.

\begin{cor}\label{corsum3} 
Let $1 \leq p < \infty$ and $p \neq 2$. Then there is a closed linear subspace $X \subset \ell^p$
so that $\A_X$ contains a linear isomorphic copy of $c_0$.
Moreover, there is a closed linear subspace $X \subset c_0$, so that 
$\A_X$ contains a copy of $c_0$.
\end{cor}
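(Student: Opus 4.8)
The plan is to reduce the statement to Corollary \ref{corsum2}(i) by first producing, for each relevant exponent, a single closed subspace $W$ of the ambient space (either $\ell^p$ or $c_0$) whose quotient algebra is already non-zero, and then taking a countable $\ell^p$-sum (respectively $c_0$-sum) of copies of $W$.

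First I would recall the deep fact, due to Enflo \cite{E}, Davie and Szankowski (see the constructions collected in \cite[2.d]{LT1}, \cite[1.g]{LT2} and \cite[10.4]{Pie}), that for every $p$ with $1 \le p < \infty$ and $p \neq 2$ the space $\ell^p$ contains a closed linear subspace failing the A.P., and likewise that $c_0$ contains such a subspace. Failure of the A.P. does not by itself produce a compact non-approximable operator, so the second ingredient is the theorem of Bachelis \cite{B} quoted in Section \ref{intro}: since $E = \ell^p$ (resp. $E = c_0$) has the B.A.P. and satisfies $E \oplus E \approx E$, and since $E$ contains a subspace failing the A.P., $E$ contains a closed subspace $W = Y \oplus X_0$ for which $\mathcal A(W) \varsubsetneq \mathcal K(W)$, that is, $\mathfrak A_W \neq \{0\}$. (For $2 < p < \infty$ the existence of such a $W \subset \ell^p$ is already contained in the earlier result of Alexander \cite{A}.)

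Next I would set $X_n = W$ for all $n \in \N$ and form $X = \big(\oplus_{n \in \N} X_n\big)_{\ell^p}$ in the $\ell^p$-case, respectively the corresponding $c_0$-sum in the $c_0$-case. The key observation is that $X$ is again isometric to a closed subspace of the ambient space: since $W \subset \ell^p$ one has the isometric inclusions $\big(\oplus_{n} W\big)_{\ell^p} \subset \big(\oplus_{n} \ell^p\big)_{\ell^p} \approx \ell^p(\N \times \N) \approx \ell^p$, and similarly $\big(\oplus_n c_0\big)_{c_0} \approx c_0$. Thus $X$ may be regarded as a closed subspace of $\ell^p$ (resp. $c_0$) carrying the required block structure $X = \big(\oplus_n X_n\big)_{\ell^p}$ with $\mathfrak A_{X_n} = \mathfrak A_W \neq \{0\}$ for every $n$.

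Finally, Corollary \ref{corsum2}(i) applies directly (with the given $p$, and with $p = \infty$ in the $c_0$-case) and yields a linear isomorphic copy of $c_0$ inside $\A_X$, which is exactly the assertion. I expect the only genuine difficulty to be entirely external to this argument: it resides in the cited existence of subspaces of $\ell^p$ and $c_0$ without the A.P., together with the Bachelis--Alexander passage from failure of the A.P. to the strict inclusion $\mathcal A(W) \varsubsetneq \mathcal K(W)$. Once $W$ is in hand, the sum construction and the appeal to Corollary \ref{corsum2} are routine.
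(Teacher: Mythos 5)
Your proposal is correct and follows essentially the same route as the paper's own proof: the paper likewise takes the Enflo--Davie--Szankowski subspace $Z \subset \ell^p$ failing the A.P., applies Bachelis' theorem to get $W = Y \oplus Z$ with $\mathcal{A}(W) \varsubsetneq \mathcal{K}(W)$, forms $X = \big(\oplus_{\N} W\big)_{\ell^p} \subset \big(\oplus_{\N} \ell^p\big)_{\ell^p} \approx \ell^p$, and invokes Corollary \ref{corsum2}(i). The $c_0$ case is treated by the same argument in both.
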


\begin{proof}
There is a closed linear subspace  $Z \subset \ell^p$ that fails the A.P. for 
any $1 \le p < \infty$ and $p \neq 2$. This follows from the work of Enflo and Davie 
for $2 < p < \infty$, see \cite[2.d]{LT1}, and  Szankowski
\cite{Sz78} for $1 \le p < 2$. 
By \cite[Thm. 1]{B} there is a closed linear subspace $Y\subset \ell^p$
such that  $\mathcal{A}(Y\oplus Z) \varsubsetneq \mathcal{K}(Y\oplus Z)$.
Let 
\[
X = \Big( \oplus_{\mathbb N} (Y \oplus Z) \Big)_{\ell^p} \subset 
\big(\oplus_{\mathbb N} \ell^p)_{\ell^p} \approx \ell^p.
\] 
Then Corollary \ref{corsum2} yields a linear embedding $c_0 \to \A_X$.
The argument for $c_0$ is similar.
\end{proof}

\begin{remark}\label{bachelis}
Suppose that $E$ has  the B.A.P.,  $E \oplus E \approx E$ 
and $E$ contains a closed subspace $X$ which fails the A.P.
By \cite[Thm. 1]{B} there is a closed linear subspace $Y\subset E$
such that  $\mathcal{A}(Y, X) \varsubsetneq \mathcal{K}(Y, X)$. Fix
$n \in \mathbb N$ and  consider
 \[
 Z_n = X \oplus (Y \oplus \ldots \oplus Y),
 \]
with $n$ copies of $Y$ in the direct sum. Then $Z_n$ is 
a closed subspace of $E$, up to linear isomorphism, since $E \oplus E \approx E$.
It is not difficult to modify the argument in Propositions \ref{sum1} and \ref{sum2}
to find a  linearly independent  family
$\{T_j + \mathcal A(Z_n): j = 1,\ldots,n\}$ in $\mathfrak A_{Z_n}$, 
whence $dim(\mathfrak A_{Z_n}) \ge n$.

However, we note that  it is not clear whether in this setting from 
\cite[Thm. 1]{B} there always is a closed linear subspace $Z \subset E$
such that $\A_Z$ is infinite-dimensional.
\end{remark}

The algebraic differences between $c_0$ and radical Banach algebras readily imply that the 
linear embedding $\psi: c_0 \to \mathfrak A_{X}$
in Proposition \ref{sum2}, or  its applications, is not an algebra homomorphism.
In fact, $\psi$  cannot preserve much of the multiplicative structure. 

\begin{prop}\label{prod} 
Suppose that $\psi$ is any linear embedding $c_0 \to \A_X$, where $X$ is a Banach space.
Then for every non-zero $a \in c_0$ there  is $b \in c_0$ such that
$\psi(ab) \neq \psi(a) \psi(b)$.
\end{prop}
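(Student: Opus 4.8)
The plan is to argue by contradiction, exploiting the fact that $\mathfrak{A}_X$ is a \emph{radical} Banach algebra, so every element is quasi-nilpotent, while $c_0$ contains elements whose pointwise powers do not decay in any radical-like fashion. Suppose, for a nonzero $a=(a_k)\in c_0$, that $\psi(ab)=\psi(a)\psi(b)$ for every $b\in c_0$. The first step is to record what this multiplicativity forces. Taking $b=a$ gives $\psi(a^2)=\psi(a)^2$, and more generally, iterating the hypothesis with $b=a,a^2,a^3,\dots$ (each of which again lies in $c_0$) yields $\psi(a^n)=\psi(a)^n$ for all $n\in\mathbb{N}$, where $a^n$ denotes the pointwise $n$-th power $(a_k^n)_k$. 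This is the key reduction: it transfers the algebraic power structure of $\psi(a)$ to the very concrete pointwise powers of the sequence $a$.

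The second step is to quantify both sides. Since $\psi$ is a linear embedding it is bounded below, say $\Vert\psi(c)\Vert\ge \delta\Vert c\Vert_\infty$ for all $c\in c_0$ and some $\delta>0$; hence
\[
\Vert\psi(a)^n\Vert=\Vert\psi(a^n)\Vert\ge \delta\,\Vert a^n\Vert_\infty=\delta\,\Vert a\Vert_\infty^{\,n},
\]
using that $\Vert a^n\Vert_\infty=\sup_k|a_k|^n=(\sup_k|a_k|)^n=\Vert a\Vert_\infty^n$. Taking $n$-th roots and letting $n\to\infty$ gives the lower bound
\[
\lim_{n\to\infty}\Vert\psi(a)^n\Vert^{1/n}\ge \Vert a\Vert_\infty>0,
\]
the strict positivity coming from $a\neq 0$. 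Thus the spectral radius of $\psi(a)$ in $\mathfrak{A}_X$ is at least $\Vert a\Vert_\infty>0$.

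The final step is to invoke the radicality of $\mathfrak{A}_X$, stated in the introduction via \cite[2.5.8(iv)]{D00}: every element of $\mathfrak{A}_X$ is quasi-nilpotent, meaning its spectral radius is zero, i.e.\ $\lim_{n\to\infty}\Vert\psi(a)^n\Vert^{1/n}=0$. This directly contradicts the strictly positive lower bound just obtained. Hence the assumption that $\psi(ab)=\psi(a)\psi(b)$ for all $b\in c_0$ is untenable, and for each nonzero $a$ there must exist some $b\in c_0$ with $\psi(ab)\neq\psi(a)\psi(b)$, as claimed.

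I expect the only genuinely delicate point to be the bookkeeping in the first step: one must verify that the multiplicativity hypothesis, which is assumed only in the form ``$\psi(ab)=\psi(a)\psi(b)$ for all $b$ with this \emph{fixed} $a$,'' really does propagate to all powers $\psi(a^n)=\psi(a)^n$. This follows by a clean induction---$\psi(a^{n+1})=\psi(a\cdot a^n)=\psi(a)\psi(a^n)=\psi(a)\cdot\psi(a)^n=\psi(a)^{n+1}$, where the middle equality is the hypothesis applied with $b=a^n\in c_0$---but it is worth stating explicitly since the hypothesis is one-sided in $a$. Everything else is a routine combination of the lower-bound property of the embedding with the spectral-radius characterization of quasi-nilpotence, so the proof is short once the reduction to powers is in place.
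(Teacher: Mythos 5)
Your proof is correct, and it takes a more direct route than the paper's. The paper argues globally: it forms the set $\mathcal A = \{a \in c_0 : \psi(ax) = \psi(a)\psi(x) = \psi(x)\psi(a) \textrm{ for all } x \in c_0\}$, shows it is a closed subalgebra of $c_0$, and derives a contradiction from spectrum preservation under the algebra isomorphism $\psi|_{\mathcal A} : \mathcal A \to \psi(\mathcal A)$ --- a nonzero closed subalgebra of the radical algebra $\A_X$ would be radical, yet nonzero elements of $\mathcal A$ have nonzero spectrum in $c_0$. You instead work with a single element: the one-sided hypothesis $\psi(ab) = \psi(a)\psi(b)$ for all $b$ propagates by induction (with $b = a^n$) to $\psi(a^n) = \psi(a)^n$, and then the lower bound $\Vert \psi(a)^n \Vert \ge \delta \Vert a \Vert_\infty^n$ coming from the embedding constant gives $\lim_n \Vert \psi(a)^n\Vert^{1/n} \ge \Vert a\Vert_\infty > 0$, contradicting quasi-nilpotence of every element of $\A_X$ (\cite[2.5.8(iv)]{D00}). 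Both proofs ultimately rest on the same tension --- $\A_X$ is radical while nonzero elements of $c_0$ are never quasi-nilpotent --- but your version buys two things: it is more elementary, needing only the spectral radius formula rather than spectra computed in subalgebras, and it is better adapted to the one-sided form of the statement, since the paper's set $\mathcal A$ is defined by a two-sided commutation condition and strictly speaking an element violating only the one-sided identity need not be excluded by $\mathcal A = \{0\}$ without a further remark; your power induction sidesteps this entirely. Your closing caveat about verifying the propagation to powers is exactly the right point to flag, and your induction settles it cleanly.
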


\begin{proof}
It is straightforward to check that 
\[
\mathcal A = \{ a \in c_0: \psi(ax) = \psi(a)\psi(x) = \psi(x)\psi(a)\  \textrm{for all}\ x \in c_0\}
\]
is a closed subalgebra of $c_0$. If $\mathcal A \neq \{0\}$, then
the restriction $\psi_{|\mathcal A}$ defines an algebra isomorphism
$\mathcal A \to \mathcal B$,
where $\mathcal B = \psi(\mathcal A)$ is a non-trivial closed subalgebra of 
$\mathfrak A_{X}$. In particular, 
\[
\sigma_{\mathcal A}(a) = \sigma_{\mathcal B}(\psi(a)), \quad a \in \mathcal A,
\]
where $\sigma_{\mathcal A}(a)$ denotes the spectrum of $a  \in \mathcal A$ 
computed in the subalgebra
$\mathcal A$ (and analogously for $\sigma_{\mathcal B}(\psi(a))$).
However, this is known to be impossible. In fact,
$\mathcal B$ is  a radical algebra as a subalgebra of 
the radical algebra $\mathfrak A_{X}$. On the other hand, non-zero elements 
$a \in \mathcal A$ are never quasi-nilpotent in $\mathcal A$, as 
$\sigma_{\mathcal A}(a)$ contains the spectrum $\sigma(a)$ of $a$ in 
$c_0$. Thus $\mathcal A = \{0\}$, and the above claim follows.
\end{proof}

For similar reasons the restriction of $\psi$ to the closed ideal
\[
\mathcal A_D = \{(x_n) \in c_0: x_n = 0 \ \textrm{for}\ n \notin D\}
\]
of $c_0$ is not an algebra homomorphism for any subset $D \subset \mathbb N$.
Nevertheless, we point out that 
the linear embedding $\psi$ from Proposition \ref{sum2} 
generates a commutative subalgebra in the quotient algebra.

\begin{prop}\label{prod1} 
Let $\psi: c_0 \to \A_X$ be the linear embedding constructed in  Proposition \ref{sum2},
and let $\mathcal B$ be the closed subalgebra of $\A_X$
generated by the image  $\psi(c_0)$.
Then $\mathcal B$ is a commutative algebra. 
\end{prop}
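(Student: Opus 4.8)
The plan is to show that the subalgebra $\mathcal{B}$ generated by $\psi(c_0)$ is commutative by reducing the question to the commutativity of products of the generating operators $U_k = J_kS_kP_k$. Since $\mathcal{B}$ is the closed subalgebra generated by $\psi(c_0)$, and $\psi(c_0)$ consists of (cosets of) operators lying in the closed linear span of $\{U_k : k \in \mathbb{N}\}$, it suffices by continuity of multiplication and density to establish that any two elements of $\psi(c_0)$ commute in $\A_X$. Indeed, once $\psi(c_0)$ is shown to be a commutative set, its closed linear span is a commutative algebra (products of limits are limits of products), and hence so is the generated closed subalgebra $\mathcal{B}$, which coincides with that span.

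First I would reduce to the generators. For $a = (a_k), b = (b_k) \in c_0$ the images are $\psi(a) = (\sum_k a_k U_k) + \mathcal{A}(X)$ and similarly for $b$. By bilinearity and continuity of the product in $\A_X$, it is enough to compute $U_k U_\ell \bmod \mathcal{A}(X)$ for each pair $k, \ell$. The key computation is
\begin{equation*}
U_k U_\ell = J_k S_k P_k J_\ell S_\ell P_\ell.
\end{equation*}
Here the crucial structural fact, already exploited in the proof of Proposition~\ref{sum2}, is that $P_k J_\ell = 0$ whenever $k \neq \ell$, since $P_k P_\ell = 0$ and $J_\ell$ is the inclusion of $X_\ell = P_\ell X$. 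Consequently $U_k U_\ell = 0$ for $k \neq \ell$, so off-diagonal products vanish exactly (not merely modulo $\mathcal{A}(X)$), and in particular $U_k U_\ell = 0 = U_\ell U_k$ for $k \neq \ell$. For $k = \ell$ the product $U_k U_k$ trivially commutes with itself. Thus all the generators commute, and in fact the multiplication on $\{U_k\}$ is diagonal.

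From this the commutativity of $\mathcal{B}$ follows. Given $a, b \in c_0$, I would approximate $\sum_k a_k U_k$ and $\sum_k b_k U_k$ by their finite partial sums $\sum_{k \le N} a_k U_k$ and $\sum_{k \le N} b_k U_k$; these converge in operator norm by the Cauchy estimate established in Proposition~\ref{sum2}. The finite partial sums commute, because $(\sum_{k \le N} a_k U_k)(\sum_{\ell \le N} b_\ell U_\ell) = \sum_{k \le N} a_k b_k U_k^2$ by the vanishing of off-diagonal products, which is symmetric in $a$ and $b$. Passing to the limit and reducing modulo $\mathcal{A}(X)$, the two products agree in $\A_X$, so $\psi(a)\psi(b) = \psi(b)\psi(a)$. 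I anticipate no serious obstacle here: the only point requiring care is the bookkeeping that the diagonal relation $P_k J_\ell = 0$ for $k \neq \ell$ indeed forces the off-diagonal operator products to vanish identically, which is immediate from the definitions. The passage from the generators to the whole closed subalgebra is then a routine density-and-continuity argument.
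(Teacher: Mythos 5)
Your proposal is correct and takes essentially the same route as the paper: the relation $P_kJ_\ell = 0$ for $k \neq \ell$ forces $U_kU_\ell = 0$, so the finite partial sums commute and $\psi(a)\psi(b) = \psi(b)\psi(a)$ follows in the limit, after which commutativity of the generated closed subalgebra $\mathcal B$ is the standard density-and-continuity argument, exactly as in the paper. One incidental claim in your first paragraph is false, though harmless: $\mathcal B$ need not coincide with the closed linear span of $\psi(c_0)$, since $\psi(a)\psi(b)$ is the coset of $\sum_k a_kb_kU_k^2$ with $U_k^2 = J_kS_k^2P_k$, and there is no reason for this coset to lie in $\psi(c_0)$; what actually closes the argument (and is what the paper invokes) is that a pairwise commuting set always generates a commutative subalgebra, because monomials in commuting elements commute and such monomials span a dense subset of $\mathcal B$.
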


\begin{proof}
Note first that 
\begin{equation}\label{comm}
\psi(a) \psi(b) = \psi(b) \psi(a) \  \textrm{ for }\  a, b \in c_0,
\end{equation}
so that $\psi(c_0)$ is a commuting subset of $\A_X$.
Namely, if $a= (a_k), b = (b_k) \in c_0$, then
\[
\big(\sum_{k=1}^m a_kU_k\big) \big(\sum_{r=1}^n b_rU_r\big) =
\sum_{k=1}^{m \land n} a_kb_k U_k^2 = 
\big(\sum_{r=1}^n b_rU_r\big) \big(\sum_{k=1}^m a_kU_k\big),
\]
since by the construction in Proposition \ref{sum2} one has $U_kU_r = 0$ for $k \neq r$. 
Above $m \land n$ denotes the smaller of $m$ and $n$.
Hence  \eqref{comm} follows by passing to the limit in the quotient norm of $\A_X$. 

Recall next that the  finite linear combinations of the products 
$\psi(x_{k_{1}})^{m_1} \cdot \ldots \cdot \psi(x_{k_{s}})^{m_s}$,
where $x_{k_{1}}, \ldots, x_{k_{s}} \in c_0$ and $m_1,\ldots , m_s \ge 1$,
form a dense subset of $\mathcal B$. Since any two such finite linear combinations 
commute by \eqref{comm}, it follows by the standard approximation argument that 
$\mathcal B \subset \A_X$ is a closed commutative subalgebra.
\end{proof}

It should be kept in mind that the quotient algebras 
$\A_X$ are usually non-commutative. 
For completeness we next recall two simple conditions towards this.
 
\begin{ex}\label{nc}
Let $X$ and $Y$ be Banach spaces.\\
(i) Suppose that  $X \oplus X \approx X$ and that there are $A, B \in \mathcal K(X)$ so that 
$AB \notin \mathcal A(X)$ or $BA \notin \mathcal A(X)$. Then the compact operators 
$\widehat{A}$ and $\widehat{B}$ on $X \oplus X$ do not 
commute modulo $\mathcal A(X \oplus X)$, where 
\[
\widehat{A}(x,y) = (Ay,0), \ \widehat{B}(x,y) = (0,Bx), \quad (x,y) \in X \oplus X.
\]
(ii) Suppose that there are 
$U \in \mathcal K(Y,X)$ and $V \in \mathcal K(Y)$ so that $UV \notin \mathcal K(Y,X)$. Then
the operators $\widehat{U}, \widehat{V} \in \mathcal K(X\oplus Y)$ 
do not commute modulo $\mathcal A(X \oplus Y)$, where
\[
\widehat{U}(x,y) = (Uy,0), \ \widehat{V}(x,y) = (0,Vy), \quad (x,y) \in X \oplus Y.
\]
\end{ex}

\section{$\mathfrak A_Z$ for the Willis spaces $Z$}\label{Willis}

The main result  of this section demonstrates that $c_0$
embeds isomorphically into the quotient algebra 
$\mathfrak A_Z = \mathcal{K}(Z)/\mathcal{A}(Z)$, 
whenever $Z$ belongs to a class of Banach spaces constructed by Willis \cite{W}.

Our starting point is the following observation, 
which points out classes of Banach spaces $X$ for which
$\mathfrak A_{X}$ is always non-trivial. Part (ii) of the following proposition
confirms the expectation from \cite{D13} that  $\mathfrak A_Z$ is infinite-dimensional
for the spaces $Z$ from \cite{W}. Remark \ref{capex}.(ii) below indicates a  
different proof of this result suggested by Dales \cite{D13}.
Recall that the Banach algebra $\mathcal A$ is nilpotent,
 if there is $m \in \mathbb N$ 
such that the product $x_1 \cdot \ldots \cdot x_m = 0$ for any
$x_1, \ldots, x_m \in \mathcal A$.

\begin{prop}\label{bcapfact1}
(i) If the Banach space $X$ has the C.A.P., but fails to have the A.P.,
then  $\mathfrak A_X \neq \{0\}$.

(ii) If the Banach space $X$ has the B.C.A.P., but fails to have the A.P.,
then  $\A_{X}$ is infinite-dimensional.
More precisely, there is $V \in \mathcal K(X)$ such that $V^n \notin \mathcal A(X)$
for every $n \in \mathbb N$, and
the closed commutative subalgebra $\mathcal B$ 
generated by $V + \mathcal A(X)$ in $\A_{X}$ is infinite-dimensional.
\end{prop}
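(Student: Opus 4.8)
The plan is to establish part (i) first and then bootstrap it to obtain the stronger conclusion (ii). For part (i), the starting point is the Grothendieck-type fact recalled earlier in Section \ref{general}: if $X$ fails the A.P., there is a subspace $Y \subset X$ with a finer complete norm $|\cdot|$ such that the inclusion $J \colon (Y,|\cdot|) \to X$ is compact but non-approximable. The extra hypothesis now is that $X$ has the C.A.P. The idea is that the C.A.P. supplies a net (or, using separability considerations, a sequence) of compact operators $K_\alpha \in \mathcal K(X)$ approximating the identity uniformly on compact sets. The plan is to feed the compact set $J(B_{(Y,|\cdot|)})$, or rather its closure, through these compact operators and show that the resulting operator cannot be norm-approximated by finite-rank operators --- otherwise one could use it to approximate $J$ itself by approximable operators, contradicting $J \notin \mathcal A(Y,X)$. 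More concretely, I would look for a single compact operator $V \in \mathcal K(X)$ whose restriction to $Y$ essentially recovers $J$ modulo $\mathcal A(X)$, so that $V \notin \mathcal A(X)$ and hence $\mathfrak A_X \neq \{0\}$.

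For part (ii), the upgrade from C.A.P. to B.C.A.P. gives a \emph{uniformly bounded} sequence (or net) of compact operators $K_\alpha$ with $\|K_\alpha\| \le M$ approximating the identity on compacta. The goal is to produce a single $V \in \mathcal K(X)$ with $V^n \notin \mathcal A(X)$ for all $n$. The natural strategy is to build $V$ so that its iterates $V^n$ keep interacting with the non-approximable inclusion $J$ in a controlled, non-degenerate way. Here is where the boundedness is essential: to control the powers $V^n$ and ensure they remain compact with norms under control, one needs the uniform bound $M$ so that products of the approximating compact operators do not blow up. I would try to arrange $V$ as a compact operator supported on the Willis-type scale of subspaces --- perhaps a weighted shift-like or infinite-matrix construction using the inclusion maps $J_n$ and the bounded compact approximants --- so that $V^n$ restricted to an appropriate test subspace reproduces a non-approximable map. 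The crux is to exhibit, for each fixed $n$, a projection-type pair $(P,Q)$ of operators on $X$ with $PV^nQ$ a scalar multiple of $J$ (or of some fixed non-approximable compact operator), yielding $\mathrm{dist}(V^n,\mathcal A(X)) > 0$.

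Once $V^n \notin \mathcal A(X)$ for every $n$, the infinite-dimensionality of the closed commutative subalgebra $\mathcal B$ generated by $V + \mathcal A(X)$ should follow from a spectral/radical argument. Since $\mathfrak A_X$ is radical, $V + \mathcal A(X)$ is quasi-nilpotent, and $\mathcal B$ is commutative and radical. If $\mathcal B$ were finite-dimensional, then being a finite-dimensional commutative radical (hence nilpotent) algebra, the generator would satisfy $(V + \mathcal A(X))^m = 0$ in $\mathfrak A_X$ for some $m$, i.e. $V^m \in \mathcal A(X)$, contradicting what was just established. Thus $\mathcal B$ is infinite-dimensional and commutative (commutativity is immediate as it is generated by a single element). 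The main obstacle I anticipate is the construction in part (ii): ensuring simultaneously that $V$ is compact, that all powers $V^n$ stay away from $\mathcal A(X)$, and that these are genuinely controlled by a \emph{single} fixed non-approximable operator rather than a vanishing sequence. The B.C.A.P. boundedness is exactly the lever that prevents the relevant quantities from degenerating, and pinning down the correct compatible family of projections and inclusions (likely using the specific direct-sum structure and approximating operators of the Willis space) is where the real work lies.
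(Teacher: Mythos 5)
Your plan for part (i) is workable, though it takes a slightly different route from the paper: you go through Grothendieck's compact non-approximable inclusion $J\colon (Y,|\cdot|)\to X$, and indeed, setting $d=\operatorname{dist}(J,\mathcal A(Y,X))>0$, the C.A.P. gives a compact $V$ with $\|VJ-J\|=\sup_{y\in B_Y}\|VJy-Jy\|<d$ (since $J(B_Y)$ is relatively compact), and $V\in\mathcal A(X)$ would force $VJ\in\mathcal A(Y,X)$ within distance $d$ of $J$, a contradiction. The paper instead argues directly from a compact set $K$ and constant $c>0$ witnessing the failure of the A.P.; both routes are sound.

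The genuine gap is in part (ii). You set out to \emph{construct} a single compact $V$ with $V^n\notin\mathcal A(X)$ for all $n$, via a shift-like operator on a ``Willis-type scale of subspaces'' together with pairs $(P,Q)$ for which $PV^nQ$ reproduces a fixed non-approximable operator. But the proposition concerns an \emph{arbitrary} Banach space with the B.C.A.P.: no direct-sum structure, no distinguished projections, and no scale of subspaces are available, so the sketched construction has no foundation, and you yourself flag the crux as unresolved. The paper sidesteps any such construction with two ideas your proposal is missing. First, decouple the quantifiers: for each \emph{fixed} $n$ choose a B.C.A.P. approximant $U$ (depending on $n$) with $\|U\|\le M$ and $\sup_{x\in K}\|x-Ux\|<c/(2M_0)$, where $M_0=\sum_{k=0}^{n-1}M^k$; the telescoping identity $x-U^nx=(I_X+U+\cdots+U^{n-1})(x-Ux)$ then gives $\sup_{x\in K}\|x-U^nx\|\le c/2$, whence $U^n\notin\mathcal A(X)$ (an approximable $U^n$ would produce a finite-rank $W$ with $\sup_{x\in K}\|x-Wx\|<c$). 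This already shows that the radical algebra $\A_X$ is not nilpotent, hence infinite-dimensional by general theory. Second, the single $V$ is obtained \emph{abstractly}, not constructively: by a Baire category theorem of Grabiner, a Banach algebra all of whose elements are nilpotent is itself nilpotent, so the non-nilpotency of $\A_X$ yields some $V+\mathcal A(X)$ with $V^n\notin\mathcal A(X)$ for every $n$. Your closing argument --- a finite-dimensional commutative radical algebra is nilpotent, so $\mathcal B$ must be infinite-dimensional --- is correct and is exactly the paper's concluding step, but it only becomes available once $V$ exists, which your proposal never actually delivers. (Incidentally, the uniform bound $M$ is not needed to keep the powers compact --- powers of a compact operator are automatically compact --- it is needed precisely in the telescoping estimate above.)
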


 \begin{proof} 
 The proof of (i) is the case $n = 1$ of the first part of the argument for 
 part (ii), so we will only verify (ii).
 
Let $n\in\N$ be arbitrary and fix $M \geq 1$ 
such that $X$ has the B.C.A.P. with constant $M$.
Since $X$ does not have the A.P.  by assumption, there is a compact subset 
 $K\subset X$ and a constant $c>0$ so that 
 \[
  \sup_{x\in K}||x-Vx||\geq c
\]
for every $V\in \mathcal F(X)$. On the other hand, 
since $X$ has the B.C.A.P. with constant $M$, 
there is a compact operator $U\in\mathcal K(X)$,
so that $||U||\leq M$ and
\[
\sup_{x\in K}||x-Ux||<\frac{c}{2M_{0}},
\]
where $M_{0}=\sum_{k=0}^{n-1} M^k$. 
Hence we get that
 \begin{align*}
||x-U^nx|| & = ||(I_X + U + \ldots + U^{n-1})(x-Ux)||
\leq \big(\sum_{k=0}^{n-1} \Vert U \Vert^k\big) \cdot ||x-Ux|| \\
& \le \big(\sum_{k=0}^{n-1} M^k\big) \cdot \frac{c}{2M_{0}} = c/2
\end{align*}
for every $x\in K$. It follows that $U^n\in\mathcal K(X)\setminus\mathcal A(X)$. 
Note that here the operator $U$ depends on $n$.

The preceding fact means that  $\A_{X}$  is radical Banach algebra which is not nilpotent.
It follows from general results about the Jacobson radical, 
see \cite[Prop. 1.5.6.(iv)]{D00}, 
that $\A_{X}$  is infinite-dimensional.
Moreover, by an application of the Baire theorem, see \cite{G} or \cite[Prop. 4.4.11.(b)]{P},
there is a compact operator
$V \in \mathcal K(X)$ such that $V^n \notin \mathcal A(X)$
for every $n \in \mathbb N$. It follows as above that the closed 
commutative subalgebra $\mathcal B$ generated by 
 $V + \mathcal A(X)$ in $\A_{X}$ is infinite-dimensional.
\end{proof}

\begin{remark}\label{capex}
(i) The first examples of Banach spaces, which have the metric C.A.P. but fail the A.P.,
were found by Willis \cite{W}. In Proposition \ref{bcapfact1} 
the classes of Banach spaces satisfying condition (i), respectively (ii), are really different. 
This fact is based on the spaces from \cite{W} in combination with known constructions
and an observation from \cite[Prop. 8.2]{C} 
or \cite[Cor. 2.4]{O}.

For this let $Y$ be a Banach space with the A.P. which fails to have the B.A.P., see  
e.g. \cite[1.e.18-1.e.20]{LT1}.
Observe that $Y$ also fails the B.C.A.P., since $\mathcal K(Y) = \mathcal A(Y)$. 
Consider  $X = Y \oplus Z$,  where $Z$ is a Banach space which has the 
 metric C.A.P. but fails to have the A.P., as given by \cite{W}.
In this event  $X = Y \oplus Z$ has the C.A.P., while $X$ fails both the A.P. and 
the B.C.A.P., as such properties are inherited by complemented subspaces.

(ii) Suppose that $X$ has the B.C.A.P. It is known as a consequence of the 
Cohen-Hewitt factorisation theorem, see e.g. \cite[2.9.26 and 2.9.37]{D00},
that any operator $S\in \mathcal K(X)$ factors as  $S = UV$ 
for suitable $U, V \in \mathcal K(X)$. Clearly this property passes to the quotient algebra
$\A_X$, so that $\A_X$  cannot be a nilpotent algebra if $\A_X \neq \{0\}$.
This also implies that $\A_X$ is infinite-dimensional
in part (ii) of Proposition \ref{bcapfact1},
as pointed out  by Dales \cite{D13}, 
but the argument included above uses only elementary facts.
\end{remark}

The argument for Proposition \ref{bcapfact1}.(ii) is algebraic in nature, 
and does not provide explicit information about $\A_X$.
The main purpose of this section is to sharpen the result
for the class of Banach spaces found by Willis \cite{W}.
More precisely, for any Banach space $X$ that fails the A.P., Willis obtains
a Banach space $Z_X$ that has the metric C.A.P., but fails to have the A.P. 
We begin by recalling the relevant details of the construction for our purposes.

Suppose that the Banach space $X$ fails to have the A.P., and  
fix a compact subset $K \subset X$ and a constant $c > 0$ so that 
 \begin{equation}\label{fail}
 \sup_{x \in K} \Vert x - Vx\Vert \ge c
 \end{equation}
for any finite rank operator $V \in \mathcal{F}(X)$. By a classical fact \cite[Prop. 1.e.2]{LT1} 
 one may assume that 
 $K = \overline{conv}\{\pm x_n: n \in \mathbb{N}\}$, where
 $0 <  \Vert x_n\Vert \le 1$ for all $n \in \mathbb{N}$ and $\Vert x_n\Vert \to 0$ as
 $n \to \infty$. For any  $0 < t <  1$  consider the closed compact symmetric subset  
 \[
 U_t = \overline{conv}\{\pm \frac{x_n}{\Vert x_n\Vert^t}: n \in \mathbb{N}\} 
 \]
of $X$  and let $Y_t \subset X$ be the linear span of $U_t$ normed by the Minkowski functional 
\[
\vert x \vert_t = \inf\{\lambda > 0: x \in \lambda U_t\}, \quad x \in Y_t.
\]
We will require the following facts: 
$(Y_t,\vert \cdot \vert_t)$ is a Banach space for any $0 < t < 1$, 
$U_t$ is the closed unit ball of $Y_t$ and
\begin{equation}\label{Wil1}
\vert x_n\vert_t\leq ||x_n||^t,  \quad n \in \mathbb{N}.
\end{equation}
Moreover, for any $0 < s < t < 1$ one has $U_s\subset U_t\subset B_X$ and 
$Y_s\subset Y_t\subset X$, so that
\begin{equation}\label{Wil2}
\Vert x\Vert \leq \vert x\vert_t \leq \vert x\vert_s, \quad x\in Y_s.
\end{equation}
In particular, the inclusions 
$Y_s\to Y_t$  and $Y_t \to X$ are bounded whenever $0 < s < t < 1$.

Let  $\mathcal{Z}$ be the linear span of the simple functions 
$\{y\chi_{(s,t)}: 0 < s < t < 1, y \in Y_s\}$ on $(0,1)$
equipped with the norm
 \[
 \Vert f\Vert = \int_0^1 \vert f(r)\vert_r dr = \sum_{k=1}^n \int_{u_k}^{v_k} \vert y_k\vert_r dr, 
 \]
for $f=\sum_{k=1}^n y_k\chi_{(u_k,v_k)}\in \mathcal Z$,
where $0 < u_1 < v_1 \le u_2 < \ldots  \le u_k  < v_k < 1$ and  
$y_k \in Y_{u_k}$ for  $k = 1, \ldots, n$. Note that  $f(r) \in Y_r$ for $f \in \mathcal{Z}$
and $0 < r < 1$, 
and that the above integral exists in the Riemann sense
by the monotonicity of the map $r \mapsto \vert y_k\vert_r$ for each $k$. 
Finally, let $Z = Z_X$ be the completion of $(\mathcal{Z},\Vert \cdot \Vert)$. 

Willis showed \cite[Prop. 1 and 2]{W} 
 that $Z$ has the metric C.A.P., but fails the A.P.
Note that $Z$ is non-reflexive and $Z^*$ is non-separable, since
 the closed subspace $L^1(0,1)x_1 \subset Z$ is isomorphic to $L^1(0,1)$, whence
$\mathcal{A}(Z)$ is also non-separable.
The following result is the main one of this paper. 
 
 \begin{theorem}\label{Willis1}
Suppose that $X$ fails to have the A.P. and let $Z = Z_X$ be the above Willis space 
associated to $X$.
Then there is a linear isomorphic embedding $\psi: c_0 \to \mathfrak A_Z$. 
 \end{theorem}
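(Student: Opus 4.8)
The plan is to apply Proposition \ref{sum2} with $p=1$, exploiting the integral structure of the norm on $Z$ to manufacture the required disjoint projections. First I would fix a sequence of pairwise disjoint open subintervals $I_n=(a_n,b_n)\subset(0,1)$ (whose union need not be dense, which is why the final clause of Proposition \ref{sum2} is relevant) and define $P_n\colon Z\to Z$ by $P_nf=\chi_{I_n}f$, that is, restriction of the $Y_r$-valued function $f$ to the parameter set $I_n$. Since $\Vert P_nf\Vert=\int_{I_n}\vert f(r)\vert_r\,dr\le\int_0^1\vert f(r)\vert_r\,dr=\Vert f\Vert$, each $P_n$ is a norm-one projection on $Z$, and $P_nP_m=0$ for $n\neq m$ because $\chi_{I_n}\chi_{I_m}=0$; this yields condition (i) of Proposition \ref{sum2} with $M=1$. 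Conditions (iii) and (iv) are then immediate for $p=1$: (iii) holds with $C=1$ by the triangle inequality, and (iv) holds with $D=1$ since $\sum_n\Vert P_nf\Vert=\sum_n\int_{I_n}\vert f(r)\vert_r\,dr\le\int_0^1\vert f(r)\vert_r\,dr=\Vert f\Vert$.

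The substantive issue is condition (ii), namely $\mathfrak A_{X_n}\neq\{0\}$ for the slices $X_n=P_nZ$. I would identify $X_n$ as the completion of the space of $Y_r$-valued simple functions supported on the parameter interval $I_n$, normed by $\int_{I_n}\vert f(r)\vert_r\,dr$; this is precisely a Willis-type space associated to $X$, but built over $I_n$ in place of $(0,1)$. The plan is to verify that both of Willis's conclusions survive this localisation, so that $X_n$ still has the metric C.A.P. and still fails the A.P. Granting this, Proposition \ref{bcapfact1}(i) gives $\mathfrak A_{X_n}\neq\{0\}$, and Proposition \ref{sum2} then delivers the linear isomorphic embedding $\psi\colon c_0\to\mathfrak A_Z$.

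The main obstacle is exactly this localisation of Willis's argument to a subinterval $I_n=(a_n,b_n)$ with $0<a_n<b_n<1$. The C.A.P. estimate should transfer routinely, since the compact approximants produced by Willis act locally in the parameter $r$. The delicate half is the failure of the A.P. for $X_n$: here I would revisit Willis's proof that $Z$ fails the A.P. and confirm that the compact, non-approximable inclusions $Y_s\to Y_t$ underlying it persist when $s,t$ are confined to $I_n$. Their compactness is unproblematic, since by \eqref{Wil1} the generators satisfy $\vert x_m/\Vert x_m\Vert^s\vert_t=\vert x_m\vert_t\,\Vert x_m\Vert^{-s}\le\Vert x_m\Vert^{t-s}\to 0$ in $Y_t$, so $U_s$ is $Y_t$-compact; the technical heart is to check that the continuum of distinct norms $\vert\cdot\vert_r$ for $r\in I_n$ already encodes the non-approximability, so that the obstruction does not rely on letting the parameter tend to the endpoints $0$ or $1$.
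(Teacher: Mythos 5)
Your framework is exactly that of the paper: Proposition \ref{sum2} with $p=1$, norm-one restriction projections $P_nf=f\chi_{I_n}$ onto pairwise disjoint parameter intervals, with conditions (i), (iii) and (iv) verified just as the paper does (it takes $I_n=(s_n,t_n)$ with $0<s_1<t_1<s_2<t_2<\cdots<1$). But the entire content of the theorem sits in condition (ii), $\mathfrak A_{P_nZ}\neq\{0\}$, and there your proposal stops at a plan rather than a proof. Two things are missing. First, your route to the metric C.A.P.\ of the slices --- localising Willis's convolution approximants to $I_n$ --- is both unverified and unnecessary; the paper simply observes that $Z_n=P_nZ$ is $1$-complemented in $Z$ and that the metric C.A.P.\ passes to $1$-complemented subspaces. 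Second, and more seriously, your route to the failure of the A.P.\ of the slice is not an argument: the slice over $I_n=(a_n,b_n)$ is not literally the Willis space of some space failing the A.P.\ (after reparametrisation it is a Willis-type space whose exponents run over a proper subinterval of $(0,1)$), so your assertion that ``both of Willis's conclusions survive this localisation'' is precisely what has to be proved, and you explicitly defer its ``technical heart''.

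The paper closes this gap with a concrete mechanism absent from your sketch. (a) It first proves (Step 2) that the inclusion $R_s:Y_s\to X$ is not approximable for any $0<s<1$: if it were, then, using the density of $R_s^*(X^*)$ in $Y_s^*$ for the topology of uniform convergence on compact subsets of $Y_s$, one could replace the $Y_s^*$-functionals of an approximating finite rank operator by $X^*$-functionals and produce $S\in\mathcal F(X)$ with $\sup_{x\in K}\Vert x-Sx\Vert<c$, contradicting \eqref{fail}. (b) It shows (Step 3) that $T_s:Y_s\to Z$, $T_sy=\frac{1}{1-s}y\chi_{(s,1)}$, is compact, via the estimate $\Vert T_s(x_m/\Vert x_m\Vert^s)\Vert\le\frac{1}{(1-s)}\frac{1}{\vert\ln\Vert x_m\Vert\vert}\to0$ and Mazur's theorem. (c) It introduces the bounded averaging map $J:Z\to X$, $J(f)=\int_0^1 f(r)\,dr$, and observes the factorisation
\[
R_{s_n}=\frac{1-s_n}{t_n-s_n}\,J\,J_n\,P_n\,T_{s_n},
\]
which forces $P_nT_{s_n}\in\mathcal K(Y_{s_n},Z_n)\setminus\mathcal A(Y_{s_n},Z_n)$; since a range space with the A.P.\ would make every compact operator into it approximable, $Z_n$ fails the A.P., and Proposition \ref{bcapfact1}(i) then gives $\mathfrak A_{Z_n}\neq\{0\}$. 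Note that this factorisation also disposes of your worry about the endpoints: it works on any subinterval and never requires the parameter to approach $0$ or $1$. Your compactness observation for the inclusions $Y_s\to Y_t$ is correct (it is the paper's Step 1), but by itself it says nothing about the A.P.\ of the slice; what is needed is a compact non-approximable operator whose range space is $Z_n$, and that is exactly what the displayed factorisation supplies. Until you provide an equivalent of (a)--(c), condition (ii) --- and hence the theorem --- remains unproven.
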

 
Recall from  Proposition \ref{prod} that the linear
embedding  $\psi: c_0 \to \mathfrak A_Z$ is not an algebra homomorphism. 
 
 \begin{proof}
In the argument we will verify in the following steps that the conditions (i)-(iv)
of Proposition \ref{sum2} are satisfied with $p = 1$. 
 
 \smallskip
 
 \noindent \textit{Step 1.} 
 Let $0 < s<t<1$. We claim that the set $U_s$ is compact in $Y_t$, 
 so that the inclusion operator $J_{s,t}: Y_s\to Y_t$ is compact.

Indeed, towards this note that  
\[
\vert \frac{x_n}{\Vert x_n\Vert^s}\vert_t\leq \Vert x_n\Vert^{t-s}\to 0
\]
as $n \to \infty$. This yields that the 
$\vert \cdot \vert_t$-closure
$\overline{conv}^{\vert \cdot\vert_t}\{\pm x_n/||x_n||^s:  n\in\N\}$ 
is compact in $Y_t$ by Mazur's theorem, so that  $J_{s,t}: Y_s\to Y_t$ is a
compact inclusion operator, because the unit ball $B_{Y_{s}} = U_s =
\overline{conv}\{\pm x_n/||x_n||^s:  n\in\N\}$.

\smallskip

 \noindent \textit{Step 2.} \textit{Claim:} the inclusion map 
$R_s: Y_s\to X$ is not an approximable operator for any $0 < s < 1$, and
 $\mathcal{A}(Y_s,Y_t) \varsubsetneq \mathcal{K}(Y_s,Y_t)$ whenever $0 < s < t < 1$.
 In particular,  $Y_t$ does not have the A.P. 
for any $0 < t < 1$. 

In fact, suppose to the contrary that
 $R_s \in \mathcal{A}(Y_s,X)$ and let $\varepsilon>0$ be given. 
 In view of the counter assumption 
 there exists a finite rank operator 
 \[
 R= \sum_{k=1}^n f_k\otimes y_k\in Y_s^*\otimes X
 \]
 such that 
 $\Vert R_s-R\Vert <\varepsilon/2$, where 
 $f_1, \ldots , f_n \in Y_s^*$ and $y_1, \ldots, y_n \in X$. In particular,
 note  that  $\Vert Rx-x \Vert <\varepsilon/2$ for $x\in K$, since 
 $K\subset U_s$. Note further that $K \subset Y_s$ is a relatively compact subset, 
 since  \eqref{Wil1} implies that 
 $\vert x_n\vert_s \le \Vert x_n\Vert^s \to 0$ as $n\to \infty$. 

Put $\delta=\varepsilon/(2\sum_{k=1}^n \Vert y_k\Vert) > 0$. According to  the argument 
in \cite[p. 33]{LT1} 
one knows that the range $R_s^*(X^*)$ is dense in $Y_s^*$ 
with respect to the topology of uniform convergence 
on the compact subsets of $Y_s$.
Hence we may pick functionals $g_1, g_2,\ldots,g_n \in X^*$ such that
\[
\sup_{x\in K}|\langle R_sx,g_k\rangle-\langle x,f_k\rangle| = 
\sup_{x\in K}|\langle x,R^*_sg_k\rangle-\langle x,f_k\rangle| < \delta
\]
for all $k=1,2,\ldots, n$.  Then the finite rank operator
$S = \sum_{k=1}^n g_k\otimes y_k\in \mathcal{F}(X)$
satisfies 
\begin{align*}
\Vert Sx-x\Vert & \leq \Vert Sx-Rx \Vert + \Vert Rx-x\Vert \\
& \le \sum_{k=1}^n \vert \langle R_sx,g_k\rangle - \langle x,f_k\rangle \vert \cdot \Vert y_k\Vert 
+ \Vert Rx-x\Vert 
<\varepsilon/2+\varepsilon/2=\varepsilon
\end{align*}
for  $x\in K$. This contradicts property \eqref{fail} of the compact set $K \subset X$ 
once  $0 < \varepsilon < c$. 

Moreover, since $R_s = R_t \circ J_{s,t}$, where $R_s \notin \mathcal{A}(Y_s,X)$,  
it follows from the result in Step 1 that 
$J_{s,t} \in \mathcal{K}(Y_s,Y_t) \setminus \mathcal{A}(Y_s,Y_t)$.
 
 \smallskip
 
 \noindent \textit{Step 3.}  For any fixed $s\in (0,1)$ define the linear map $T_s: Y_s\to Z$ by 
\[
T_s(y)=\frac{1}{1-s}y\chi_{(s,1)}, \quad y \in Y_s.
\] 
We claim that  $T_s$ is a compact operator. (This fact was used in \cite{W} for $s = 1/2$.)

To verify the claim, note that for any $n\in\N$ one has
\begin{align*}
\Vert T_s(x_n) \Vert & = \frac{1}{1-s} \int_0^1 \vert x_n\chi_{(s,1)}(r)\vert_r dr =
\frac{1}{1-s}\int_s^1 \vert x_n \vert _r dr\\
& \leq \frac{1}{1-s}\int_s^1 \Vert x_n\Vert^r dr =
\frac{1}{(1-s)}\frac{(\Vert x_n\Vert - \Vert x_n\Vert^s)}{\ln \Vert x_n\Vert }\\
& \le \frac{1}{(1-s)}\frac{\Vert x_n\Vert^s}{\vert \ln \Vert x_n\Vert  \vert},
\end{align*}
so that 
\[
\Vert T_s(x_n/\Vert x_n\Vert^s) \Vert  \le \frac{1}{(1-s)} \frac{1}{\vert \ln \Vert x_n\Vert  \vert}.
\]
Since $x_n \to 0$ in $X$ as $n \to \infty$, we conclude that
$\Vert T_s(x_n/\Vert x_n\Vert^s)\Vert \to 0$ as $n \to \infty$. By Mazur's theorem
the set 
\[
 \overline{conv}\{\pm T_s(x_n/\Vert x_n\Vert^s): n\in\N\}
\]
is compact in $Z$, and 
since $T_sU_s\subset \overline{conv}\{\pm T_s(x_n/\Vert x_n\Vert^s):  n\in\N\}$,
we get that $T_s$ is a compact operator $Y_s \to Z$.

\smallskip

For the remainder of the argument we next fix intertwining sequences 
$(s_n)$ and $(t_n)$ such that 
$0 < s_1 < t_1 < s_2 < t_2 < \ldots < 1$,
 where $s_n \to 1$  as $n \to \infty$. Then 
 $f \mapsto P_nf = f\chi_{(s_n,t_n)}$ is a well-defined  norm-$1$ projection of 
 $Z$  onto the closed subspace $Z_n = P_nZ \subset Z$ for any $n \in \N$, since 
 \[
 ||P_n f||=   \int_{s_n}^{t_n}\vert f(r)\vert_r dr\leq \int_0^1 \vert f(r) \vert_r dr= \Vert f \Vert
 \]
for any $f \in Z$.
Moreover, we may define a bounded linear operator  $J: Z\to X$ by 
\begin{equation}\label{map1}
J(f)=\int_0^1 f(r) dr, \quad  f \in Z.
\end{equation}
In fact, for a simple function $f=\sum_{k=1}^n y_k\chi_{(u_k,v_k)}\in \mathcal Z$,
where $0 < u_1 < v_1 \le u_2 < \ldots  \le u_k  < v_k < 1$ and  
$y_k \in Y_{u_k}$ for each $k = 1, \ldots, n$, put
\[
J(f)=\sum_{k=1}^n (v_k-u_k)y_k = \int_0^1 f(r) dr. 
\]
Observe that by \eqref{Wil2} one has 
\[
\Vert J(f)\Vert \leq \sum_{k=1}^n (v_k-u_k)\Vert y_k\Vert \le
\sum_{k=1}^n \int_{u_{k}}^{v_k} \vert y_k\vert_r dr = \Vert f\Vert
\]
for such simple functions $f\in \mathcal Z$, so that the above map admits by density a  
bounded linear extension $J$ to $Z$  that satisfies (\ref{map1}).

\smallskip

The following step verifies the crucial condition (ii) from Proposition \ref{sum2}.

 \smallskip

\noindent \textit{Step 4.} \textit{Claim:} 
$\mathfrak A_{Z_n}  \neq\{0\}$ for all $n \in \mathbb{N}$. 

Towards this claim let $J_n:Z_n\to Z$ denote the inclusion map for $n\in\N$. 
It is easy to check that the inclusion $R_{s_{n}}: Y_{s_n}\to X$ factors as 
\begin{equation}\label{factor1}
R_{s_{n}} = c_nJ J_n P_n T_{s_{n}}, 
\end{equation}
 where $c_n =\frac{1-s_n}{t_n-s_n}$ and the operators 
 $T_{s_n}:Y_{s_n}\to Z$ and $J: Z\to X$ are defined in Step 3, respectively in (\ref{map1}). 
Since $R_{s_{n}}$ is not an approximable operator by Step 2, it follows 
from \eqref{factor1} that 
$P_n T_{s_n}$ is not approximable $Y_{s_n}\to Z_n$. 
Recall from Step 3 that $T_{s_n}$ is a compact operator, whence 
\[
P_nT_{s_n}\in \mathcal K(Y_{s_n},Z_n)\setminus \mathcal A(Y_{s_n},Z_n). 
\]
This means  that $Z_n$ does not have the A.P. 
On the other hand,  $Z_n$ is a $1$-complemented subspace of $Z$, 
where $Z$ has the metric C.A.P. by \cite[Prop. 2]{W}, 
so  $Z_n$ also has the  metric C.A.P. 
Consequently $\mathfrak A_{Z_n}\neq\{0\}$ in view of  Proposition \ref{bcapfact1}.

 \smallskip

We finally check the remaining conditions of Proposition \ref{sum2} with 
$p = 1$ for $Z$ and the sequence $(P_n) \subset \mathcal L(Z)$ of norm-$1$ projections 
onto the subspaces $Z_n$. Condition (iii) 
is obvious for $p = 1$. Moreover,
in view of the disjoint supports on $(0,1)$ of the functions in the subspaces $Z_n$, 
the integration norm in $Z$ satisfies
\begin{equation}\label{dis1}
\sum_{k=1}^\infty \Vert P_kf\Vert =   \sum_{k=1}^\infty  \int_{s_{k}}^{t_k} \vert f(r)\vert_r dr \le 
\int_0^1  \vert f(r)\vert_r dr = \Vert f\Vert
\end{equation}
for any $f \in Z$. Hence condition (iv) is also satisfied, and the proof of the theorem is 
completed by an application of  Proposition \ref{sum2}.
\end{proof}

\begin{remark}
The proof   in \cite[Prop. 2]{W}  that $Z$ has the metric C.A.P.  uses a sequence 
$(T_n) \subset \mathcal K(Z)$ of vector-valued convolution operators on $Z$.
It is conceivable that  basic subsequence techniques applied to 
 $(T_{n} + \mathcal A(Z))$ in $\A_Z$ might produce isomorphic copies of $c_0$,  
or even of other spaces.
However, for this approach to be feasible one is likely to need 
descriptions of the dual spaces $Z^*$, $Z^{**}$ and $\mathcal K(Z)^*$.
\end{remark}

Recall from the proof of Corollary \ref{corsum3} that there are closed linear subspaces  
$X \subset \ell^p$ that fail the A.P. for 
any $1 < p < \infty$ and $p \neq 2$. 
Willis \cite[Prop. 3 and 4]{W} includes 
a modified construction, where these subspaces $X$
lead to a separable reflexive
space $Z^{\sharp}_p$, so that again $Z^{\sharp}_p$ has the metric C.A.P. 
but fails to have the A.P. 
The space $Z^{\sharp}_p$ is a quotient of a closed subspace of $L^p(0,1)$,
see \cite[Prop. 3]{W}.
We observe next  for completeness that Theorem \ref{Willis1} 
remains valid for $Z^{\sharp}_p$, 
but we will not reproduce all the technical details. 

\begin{ex}\label{Willisv2}
There is a linear isomorphic embedding 
$\psi: c_0 \to \mathfrak A_{Z^{\sharp}_p}$ for  $1 < p < \infty$ and $p \neq 2$. 
\end{ex}

\begin{proof} 
The construction in \cite[p. 103]{W} is based on certain modified 
compact convex symmetric sets $V_t \subset X$ for $0 < t < 1$ that also depend on $p$. 
As before one introduces associated Banach spaces $(W_t,\vert \cdot \vert_t)$ 
such that $W_s \subset W_t \subset X$ for $0 < s < t < 1$, where $W_t$ is the linear span
of $V_t$ in $X$. The space $Z^{\sharp}_p$ is the completion of
the linear span of  the simple functions 
$\{y\chi_{(s,t)}: 0 < s < t < 1, y \in W_s\}$ on $(0,1)$
equipped with the norm
 \[
 \Vert f\Vert_p = \Big( \int_0^1 \vert f(r)\vert^p_r dr\Big)^{1/p}.
 \]
 Fix  once more sequences $(s_n)$ and $(t_n)$ such that 
$0 < s_1 < t_1 < s_2 < t_2 < \ldots < 1$,
 where $s_n \to 1$  as $n \to \infty$, and let $P_n$ be the  
norm-$1$ projection of 
 $Z^{\sharp}_p$  onto the closed subspace 
 $U_n := P_n(Z^{\sharp}_p) \subset Z^{\sharp}_p$
 consisting of the functions supported on $(s_n,t_n)$ for each $n$. 
 
 Let $R_s: Y_s \to X$ be the inclusion map from Theorem \ref{Willis1}, and let 
  $T^{\sharp}_{s_n}: Y_{s_n}\to Z^{\sharp}_p$ and $J^{\sharp}: Z^{\sharp}_p\to X$ 
  be analogous operators
 to those of Step 3 of that argument. One verifies as in Step 3 that 
  $T^{\sharp}_{s_n} \in \mathcal K(Y_{s_n},Z^{\sharp}_p)$ for each $n$.
  Moreover, 
  \[
 R_{s_{n}} = c_nJ^{\sharp} J_n P_n T^{\sharp}_{s_{n}}, 
\]
 where $c_n =\frac{1-s_n}{t_n-s_n}$. Thus
 $P_n T^{\sharp}_{s_{n}}$ is a compact non-approximable operator
 $Y_{s_n} \to  U_n$, since $R_{s_{n}}$ is non-approximable by Step 2 of 
 Theorem \ref{Willis1}.
 Hence $U_n$ has the metric C.A.P. but fails the A.P. for all $n \in \mathbb N$, 
 so that $\mathfrak A_{U_n}  \neq\{0\}$ by Proposition \ref{bcapfact1}.
 
 The rest of the argument is similar to  that of Theorem \ref{Willis1}, since
conditions (iii) and (iv) from Proposition \ref{sum2} are satisfied for $Z^{\sharp}_p$
 with the exponent $p \in (1,\infty)$. 
 Namely, by the disjointness of the supports one gets that 
 \[
 \Vert \sum_{n=1}^\infty f_n\Vert_p = \big(\sum_{n=1}^\infty \Vert f_n\Vert_p^p\big)^{1/p}
 \]
 whenever $f_n \in P_n(Z^{\sharp}_p)$ 
 for $n \in \mathbb N$ and $\sum_{n=1}^\infty \Vert f_n\Vert^p$ is finite.
\end{proof}
 
We note for completeness  that various other results and applications 
which involve the Willis spaces $Z_X$ can be found e.g. in 
\cite{T}, \cite{CJ}, \cite {LO} and \cite{OZ}.

\section{Further examples on universal spaces}\label{further}

In this section we discuss explicit examples of universal  Banach spaces $X$, 
where the quotient algebra $\A_X$ is large. The examples include 
the complementably universal 
conjugate  space $C_1^*$ of Johnson, and some universal compact 
factorisation spaces that originate in the work of Johnson and Figiel.

We first recall from  \cite[1.e.7]{LT1} that $X$ has the A.P. whenever the dual space 
$X^*$ has the A.P., and that the converse fails in general.
We next show that analogous facts hold for the respective quotient algebras.

 \begin{prop}\label{dual}
 Let $X$ be any Banach space and define $\theta: \A_X \to \A_{X^*}$ by 
 \[
 \theta(S + \mathcal{A}(X)) = S^* + \mathcal{A}(X^*), \quad S \in \mathcal K(X).
 \]
 Then $\theta$ is an isometric linear embedding  $\A_X \to \A_{X^*}$,
 which is an anti-homomorphism.
 \end{prop}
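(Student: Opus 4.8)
The plan is to verify the four assertions — well-definedness, linearity, the isometry, and the anti-homomorphism property — treating only the isometry as substantial. First I would dispose of the routine parts. Since $S\mapsto S^*$ is a linear isometry $\mathcal L(X)\to\mathcal L(X^*)$ that carries $\mathcal F(X)$ into $\mathcal F(X^*)$, and hence $\mathcal A(X)$ into $\mathcal A(X^*)$ (adjoints of finite rank operators have finite rank, and the adjoint preserves the operator norm), the assignment $S+\mathcal A(X)\mapsto S^*+\mathcal A(X^*)$ is well-defined and linear on $\A_X$. It is an anti-homomorphism because $(ST)^*=T^*S^*$, so that $\theta\big((S+\mathcal A(X))(T+\mathcal A(X))\big)=(ST)^*+\mathcal A(X^*)=\theta(T+\mathcal A(X))\,\theta(S+\mathcal A(X))$. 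The same two facts yield the easy half of the isometry: for $V\in\mathcal A(X)$ one has $V^*\in\mathcal A(X^*)$ and $\|S^*-V^*\|=\|S-V\|$, whence $dist(S^*,\mathcal A(X^*))\le dist(S,\mathcal A(X))$; in particular $\theta$ is a contraction. Once the reverse inequality is established, $\theta$ is isometric and therefore an (isometric, hence isomorphic) embedding.

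The heart of the matter is the reverse inequality $dist(S,\mathcal A(X))\le dist(S^*,\mathcal A(X^*))$. I would not attack this directly, since a general approximable operator on $X^*$ need not be an adjoint, and a naive Hahn--Banach argument merely reproduces the easy inequality with the wrong sign. Instead I would pass to the bidual and prove the single inequality
\[
dist(S,\mathcal A(X))\ \le\ dist(S^{**},\mathcal A(X^{**})),
\]
and then sandwich. Applying the contraction property to $X$ and to $X^*$ gives $dist(S^{**},\mathcal A(X^{**}))\le dist(S^*,\mathcal A(X^*))\le dist(S,\mathcal A(X))$, so the displayed inequality forces all three quantities to coincide; in particular $dist(S^*,\mathcal A(X^*))=dist(S,\mathcal A(X))$, which is exactly the isometry.

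To establish the bidual inequality I would argue as follows. Fix $\varepsilon>0$ and pick $F\in\mathcal F(X^{**})$ with $\|S^{**}-F\|<dist(S^{**},\mathcal A(X^{**}))+\varepsilon$. Write $\kappa\colon X\to X^{**}$ for the canonical embedding, and recall that $S^{**}\kappa=\kappa S$. Since $S$ is compact, $S(B_X)$ is totally bounded, so there are $x_1,\dots,x_m\in B_X$ for which $Sx_1,\dots,Sx_m$ form an $\varepsilon$-net of $S(B_X)$; set $G=\operatorname{span}\{\kappa Sx_1,\dots,\kappa Sx_m\}+F(X^{**})$, a finite-dimensional subspace of $X^{**}$. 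By the principle of local reflexivity there is an operator $\Lambda\colon G\to X$ with $\|\Lambda\|\le 1+\varepsilon$ that restricts to the identity on $G\cap\kappa(X)$; in particular $\Lambda(\kappa Sx_j)=Sx_j$. Then $V=\Lambda F\kappa\in\mathcal F(X)$ is the desired approximant: for $x\in B_X$, choosing $j$ with $\|Sx-Sx_j\|<\varepsilon$ and using $\|(S^{**}-F)\kappa x\|\le\|S^{**}-F\|$ together with $S^{**}\kappa=\kappa S$, a three-term estimate gives
\[
\|Sx-Vx\|\ \le\ \varepsilon+(1+\varepsilon)\big(dist(S^{**},\mathcal A(X^{**}))+2\varepsilon\big).
\]
Taking the supremum over $x\in B_X$ and letting $\varepsilon\to 0$ yields the bidual inequality.

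The main obstacle is precisely this reverse inequality, and the crux is that one cannot descend an approximant of $S^*$ on $X^*$ directly: the uniformity over the full ball $B_{X^*}$ blocks any pointwise replacement of bidual vectors by vectors of $X$. Passing to $X^{**}$ converts the approximation into a genuine operator-norm statement $\|S^{**}-F\|$, which absorbs that uniformity, while local reflexivity — fed by the net arising from the compactness of $S$ on the range side — pulls $F$ back to $X$ at a cost of only a factor $1+\varepsilon$. I expect the local-reflexivity step to be the most delicate part of the write-up, in particular arranging $G$ to capture both the range of $F$ and a fine net of $S(B_X)$, so that $\Lambda$ acts as the identity on the relevant points $\kappa Sx_j$.
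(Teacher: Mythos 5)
Your proof is correct, and it reaches the statement by essentially the same tool the paper uses, but in a more self-contained way. The paper's own proof is two lines: well-definedness from the fact that $U\in\mathcal A(X)$ implies $U^*\in\mathcal A(X^*)$, the anti-homomorphism property from $(ST)^*=T^*S^*$, and then the whole isometry is outsourced to a citation, namely that the principle of local reflexivity yields $dist(S^*,\mathcal A(X^*))=dist(S,\mathcal A(X))$ for compact $S$ (see \cite[Prop.~2.5.2]{CS}). You instead prove that cited equality from scratch, and your organisation of the argument is sound: establishing only the single inequality $dist(S,\mathcal A(X))\le dist(S^{**},\mathcal A(X^{**}))$ and sandwiching with two applications of the trivial contraction inequality is a clean reduction, and the local-reflexivity step checks out --- both $\kappa Sx_j$ and $F\kappa x$ lie in the finite-dimensional space $G$, so $\Lambda$ applies, $\Lambda(\kappa Sx_j)=Sx_j$ since $\kappa Sx_j\in G\cap\kappa(X)$, and the three-term estimate $\Vert Sx-Vx\Vert\le\varepsilon+(1+\varepsilon)\bigl(\Vert\kappa Sx_j-\kappa Sx\Vert+\Vert(S^{**}-F)\kappa x\Vert\bigr)\le\varepsilon+(1+\varepsilon)\bigl(dist(S^{**},\mathcal A(X^{**}))+2\varepsilon\bigr)$ is valid for $V=\Lambda F\kappa\in\mathcal F(X)$ (here one uses, as you implicitly do, that $dist(S^{**},\mathcal A(X^{**}))=dist(S^{**},\mathcal F(X^{**}))$, immediate since $\mathcal A$ is the closure of $\mathcal F$). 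You also place the compactness of $S$ exactly where it is indispensable: the finite $\varepsilon$-net of $S(B_X)$ is what allows the finite-dimensional $G$ to capture the range side, and the remark following the proposition in the paper confirms this cannot be dispensed with --- for merely bounded $S$ one only gets $dist(S,\mathcal A(X))\le 5\cdot dist(S^*,\mathcal A(X^*))$, and the induced map on $\mathcal L(X)/\mathcal A(X)$ fails to be an isometry in general. In short: same key lemma as the paper, but you buy independence from the reference at the cost of the (correctly executed) bidual-plus-net argument.
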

 
The fact that $\theta$ is an anti-homomorphism 
means that $\theta$ reverses products, that is,
 \[
 \theta (ST+\mathcal{A}(X)) = \theta(T+\mathcal{A}(X)) \cdot \theta(S+ \mathcal{A}(X))
 \]
for $S, T \in \mathcal K(X)$. This property is obvious from $(ST)^* = T^*S^*$.

\begin{proof}
Note first that $U \in  \mathcal{A}(X)$ implies that $U^* \in \mathcal{A}(X^*)$,
so $\theta$ is a well-defined map. Moreover, $\theta$ is an isometry 
since the principle of local reflexivity implies that 
\[
dist(S^*, \mathcal{A}(X^*)) = dist(S, \mathcal{A}(X))
\]
for any $X$ and any compact operator $S \in \mathcal K(X)$,
see  e.g. \cite[Prop. 2.5.2]{CS}.
\end{proof}

\begin{remark}
Let $X$ be any Banach space. The map $S \mapsto S^*$ also induces well-defined
quotient maps $\psi: \mathcal L(X)/\mathcal A(X) \to  \mathcal L(X^*)/\mathcal A(X^*)$
 and  $\chi:  \mathcal L(X)/\mathcal K(X) \to  \mathcal L(X^*)/\mathcal K(X^*)$
as above, but $\psi$ and $\chi$ 
 behave differently from $\theta$.
 
Namely, by the principle of local reflexivity one also has 
 \begin{equation*}
dist(S, \mathcal{A}(X)) \le 5 \cdot dist(S^*, \mathcal{A}(X^*)) = 
5 \cdot \Vert \psi(S+\mathcal{A}(X))\Vert
\end{equation*}
for any  bounded operator $S \in \mathcal L(X)$ and any Banach space $X$,
see e.g. \cite[Prop. 2.5.4]{CS}. Moreover, $\psi$ is not always an isometry, 
see (2.5.7) and (2.5.9) in \cite{CS}.

Finally, by \cite[Example 2.5]{T} there is a Banach space $X$ 
such that  $\chi$ is not even bounded from below
$\mathcal L(X)/\mathcal K(X) \to \mathcal L(X^*)/\mathcal K(X^*)$
in the respective quotient norms.
Above $X$ does not have the B.C.A.P. by \cite[Thm. 2.4]{T}.
\end{remark}

We next include examples where the difference between 
$\A_X$ and $\A_{X^*}$ is large. 
Towards this  fix $1 \le p \le \infty$ and let
$C_p  = (\oplus_{m\in \mathbb N} G_m)_{\ell^p}$ be the class of universal spaces
constructed by Johnson \cite{J71}. For the definition  fix a sequence 
$(E_n)_{n\in \mathbb N}$ of finite-dimensional spaces 
which is dense in the Banach-Mazur distance $d_{BM}$ in the class of 
all finite-dimensional spaces, 
and repeat each $E_n$ infinitely often to obtain the listing 
$(G_m)_{m \in \mathbb N}$. Above
\[
d_{BM}(E,F) = \inf \{ \Vert T \Vert \cdot \Vert T^{-1}\Vert: T \ \textrm{linear isomorphism}\ E \to F\}.
\]
For unity of notation $p = \infty$ 
corresponds again to a direct $c_0$-sum.
Note that with this convention  $C_p$ has the A.P. for any 
$1 \le p \le \infty$.

 \begin{ex}\label{john}
The quotient $\A_{C_1^*}$ contains a linear isomorphic copy of $c_0$,
 but  $\A_{C_1} = \{0\}$.
 \end{ex}
 
\begin{proof}
By Corollary \ref{corsum3} there is a separable reflexive 
subspace $X \subset \ell^p$ such that $\A_{X}$ 
contains a linear isomorphic copy of $c_0$. 
Proposition \ref{dual} implies that $\A_{X^*}$ contains an isomorphic  copy of $c_0$. 
(Since $X$ is reflexive, the latter conclusion is also 
seen directly by passing to adjoints. Obviously
$X = Z^{\sharp}_p$ from Example \ref{Willisv2} can also be used for the above purpose,
but Theorem \ref{Willis1} and
Example \ref{Willisv2} depend on longer arguments.)

Johnson \cite[Thm. 1]{J72} showed that  $C_1^*$ is 
complementably universal for separable spaces. 
This result says that
$X^*$ is isometric to a $1$-complemented subspace of $C_1^*$, that is, 
\[
C_1^* = W \oplus M,
\]
where $W$ is isometric to $X^*$ and $W$ is complemented in $C_1^*$ 
by a norm-$1$  projection.
Finally, by applying the subsequent Proposition \ref{dsum} 
to the direct sum $C_1^* = W \oplus M$, 
we obtain that $\A_{C_1^*}$  contains a linear isomorphic copy of  $c_0$.
 \end{proof}
 
The following observation  can be viewed as a more precise version of 
Proposition \ref{sum2} for finite direct sums, and this fact will also be used later on.
It will be convenient to denote operators
$S \in \mathcal L(X\oplus Y)$ on the direct sum $X \oplus Y$ as 
$2\times 2$-operator matrices
\[
S = \begin{pmatrix} S_{11} & S_{12} \\ S_{21} & S_{22}\end{pmatrix}
\]
in the canonical way. This means that $S_{11} = P_1SJ_1$, $S_{12} = P_1SJ_2$,
$S_{21} = P_2SJ_1$ and $S_{22} = P_2SJ_2$,
where $P_1$ is the projection $X\oplus Y \to X$, $J_1$ is the inclusion $X \to X\oplus Y$,
and $P_2$ and $J_2$ are the analogous operators for the summand $Y$. 

\begin{prop}\label{dsum}
Let $X$ and $Y$ be Banach spaces, where $\A_X \neq \{0\}$. 
Then $\A_X$  is algebra isomorphic to 
the complemented subalgebra 
\[
\mathcal M = \Big\{ \begin{pmatrix} S_{11} & 0 \\ 0 & 0\end{pmatrix}+\mathcal A(X\oplus Y):
S_{11} \in \mathcal K(X)\Big\}
\]
of $\A_{X\oplus Y}$. 
\end{prop}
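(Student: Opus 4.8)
The plan is to realise $\A_X$ as the upper-left corner of $\A_{X\oplus Y}$ and to split off that corner by a bounded idempotent. Concretely, I would define $\Phi\colon \A_X \to \A_{X\oplus Y}$ by $\Phi(S+\mathcal A(X)) = J_1 S P_1 + \mathcal A(X\oplus Y)$ for $S \in \mathcal K(X)$, which in matrix form sends $S$ to $\left(\begin{smallmatrix} S & 0 \\ 0 & 0\end{smallmatrix}\right)+\mathcal A(X\oplus Y)$. Since $\mathcal A(X)$ and $\mathcal A(X\oplus Y)$ are ideals, $J_1\mathcal A(X)P_1 \subset \mathcal A(X\oplus Y)$, so $\Phi$ is well-defined and linear, and its range is exactly $\mathcal M$. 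The identity $P_1 J_1 = I_X$ gives $(J_1 S P_1)(J_1 T P_1) = J_1(ST)P_1$, so $\Phi$ is an algebra homomorphism; in particular $\mathcal M$ is a subalgebra of $\A_{X\oplus Y}$.

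Next I would show that $\Phi$ is an isomorphism by a two-sided distance estimate. For the lower bound, given any $V \in \mathcal A(X\oplus Y)$ the compression $P_1 V J_1$ lies in $\mathcal A(X)$, and since $P_1(J_1 S P_1)J_1 = S$ one has
\[
dist(S,\mathcal A(X)) \le \Vert S - P_1 V J_1\Vert = \Vert P_1(J_1 S P_1 - V)J_1\Vert \le \Vert P_1\Vert\,\Vert J_1\Vert\,\Vert J_1 S P_1 - V\Vert;
\]
taking the infimum over $V$ yields $dist(S,\mathcal A(X)) \le \Vert P_1\Vert\,\Vert J_1\Vert\cdot dist(J_1 S P_1,\mathcal A(X\oplus Y))$. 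For the upper bound, for $W \in \mathcal A(X)$ the operator $J_1 W P_1$ is approximable, whence $dist(J_1 S P_1,\mathcal A(X\oplus Y)) \le \Vert J_1\Vert\,\Vert P_1\Vert\cdot dist(S,\mathcal A(X))$. With the usual contractive direct-sum norm ($\Vert P_1\Vert = \Vert J_1\Vert = 1$) these two estimates show that $\Phi$ is isometric, and in any case $\Phi$ is a bounded isomorphism onto its closed range $\mathcal M$. Hence $\mathcal M$ is a closed subalgebra of $\A_{X\oplus Y}$ that is algebra isomorphic to $\A_X$, and the hypothesis $\A_X \ne \{0\}$ ensures $\mathcal M$ is non-trivial.

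Finally I would exhibit a bounded projection onto $\mathcal M$. Put $E = J_1 P_1 \in \mathcal L(X\oplus Y)$, the idempotent $\left(\begin{smallmatrix} I & 0 \\ 0 & 0\end{smallmatrix}\right)$, and define $Q\colon \A_{X\oplus Y} \to \A_{X\oplus Y}$ by $Q(T+\mathcal A(X\oplus Y)) = ETE + \mathcal A(X\oplus Y)$. Because $\mathcal A(X\oplus Y)$ is an ideal, $Q$ is well-defined and bounded with $\Vert Q\Vert \le \Vert E\Vert^2$; since $E^2 = E$ it is idempotent; its range is $\mathcal M$ (as $ETE = J_1(P_1 T J_1)P_1$ with $P_1 T J_1 \in \mathcal K(X)$), and $Q$ restricts to the identity on $\mathcal M$. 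Thus $Q$ is a bounded projection of $\A_{X\oplus Y}$ onto $\mathcal M$, which is therefore complemented. The only points needing care are that compressions and corners of approximable operators remain approximable (immediate from the ideal property) and the two-sided distance estimate above; I do not expect a serious obstacle, the argument being the standard corner/idempotent description of a matrix algebra transported to the quotient.
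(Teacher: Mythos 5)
Your proof is correct and follows essentially the same route as the paper: your embedding $\Phi(S+\mathcal A(X)) = J_1SP_1 + \mathcal A(X\oplus Y)$ is the paper's homomorphism $\alpha$, complete with the same lower distance estimate via the compression $P_1VJ_1 \in \mathcal A(X)$, and your corner map $Q$ induced by $E = J_1P_1$ (i.e.\ $ETE = J_1(P_1TJ_1)P_1$) is exactly the paper's projection induced by $\Psi(S) = J_1(P_1SJ_1)P_1$. The only cosmetic differences are that you phrase the projection through the idempotent $E$ and record the (correct) observation that boundedness below of $\Phi$ yields closedness of $\mathcal M$.
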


\begin{proof}
We first verify that the map $\Phi$ given by 
\[
\Phi(S + \mathcal A(X\oplus Y)) =
\begin{pmatrix} S_{11} & 0 \\ 0 & 0\end{pmatrix}+\mathcal A(X\oplus Y), 
\quad S \in \mathcal K(X\oplus Y),
\]
defines a bounded linear projection $\A_{X\oplus Y} \to \mathcal M$. 
Towards this fact we consider the map 
$\Psi: \mathcal K(X\oplus Y) \to \mathcal K(X\oplus Y)$
defined by 
\[
\Psi(S) = \begin{pmatrix} S_{11} & 0 \\ 0 & 0\end{pmatrix}, \quad  
S = \begin{pmatrix} S_{11} & S_{12} \\ S_{21} & S_{22}\end{pmatrix} \in 
\mathcal K(X\oplus Y).
\] 
Observe that $\Psi$ is a
bounded linear operator,  $\Vert \Psi \Vert = 1$, since we may write 
\[
\Psi(S) = J_1(P_1SJ_1)P_1, \quad S \in \mathcal K(X\oplus Y).
\]
Clearly $\Psi(\mathcal A(X\oplus Y)) \subset \mathcal A(X\oplus Y)$, so that
$\Psi$ induces the above map $\Phi$ on the quotient space
$\A_{X \oplus Y}$, whence $\Vert \Phi \Vert \le \Vert \Psi\Vert = 1$. 

Next define $\alpha: \A_X \to \A_{X\oplus Y}$ by 
\[
\alpha (U + \mathcal A(X)) = \begin{pmatrix} U & 0 \\ 0 & 0\end{pmatrix}+\mathcal A(X\oplus Y),
\quad U \in \mathcal K(X).
\]
It is not difficult to check that $\alpha$ is a well-defined 
algebra homomorphism, and by arguing as above,
one gets that $\Vert \alpha \Vert = 1$. Moreover, if $U \in \mathcal K(X)$
and $A \in \mathcal A(X \oplus Y)$, then
\[
\Vert  \begin{pmatrix} U & 0 \\ 0 & 0\end{pmatrix} - A \Vert \ge
\Vert P_1\big(\begin{pmatrix} U & 0 \\ 0 & 0\end{pmatrix} - A\big)J_1\Vert = 
\Vert U - A_{11}\Vert \ge dist(U,\mathcal A(X)).
\]
We conclude that $\Vert \alpha(U  +\mathcal A(X))\Vert = \Vert U +\mathcal A(X)\Vert$
holds for all $U \in \mathcal K(X)$, so that
$\alpha$  defines an algebra isomorphism $\A_X \to \mathcal M$.
\end{proof}

The universal conjugate space $C_1^*$ is not separable, but 
there are spaces $Y$ with $Y^*$ separable, which display the
same duality behaviour as $X = C_1$ in Example \ref{john}. For this we 
use another known construction. 

\begin{ex}\label{JL}
 Let $X$ be a separable reflexive Banach space
for which $\A_{X^*}$ contains a linear isomorphic copy of $c_0$, as in Example \ref{john}. 
A  construction due to James and Lindenstrauss, see \cite[1.d.3]{LT1}, 
provides a Banach space $Z$ (which depends on $X$)
such that $Z^{**}$ has a Schauder basis and $Z^{**}/Z \approx X$. It follows that 
\[
Z^{***} \approx Z^* \oplus X^*, 
\] 
so that $Z^{***}$ is separable.  
Proposition \ref{dsum} implies that $\A_{Z^{***}}$ 
contains a linear isomorphic copy of $c_0$, 
while $\A_{Z^{**}} = \{0\}$. 

Actually, in this example one may explicitly identify $\A_{Z^{***}}$ with $\A_{X^*}$, since the  
component operators with respect to the decomposition $Z^{***} \approx Z^* \oplus X^*$
satisfy $\mathcal{K}(Z^*) = \mathcal{A}(Z^*)$, $\mathcal{K}(X^*,Z^*) = \mathcal{A}(X^*,Z^*)$
and $\mathcal{K}(Z^*,X^*) = \mathcal{A}(Z^*,X^*)$.
Here one uses the fact that $Z^{**}$ as well as $Z^*$  have the A.P.
\end{ex}

Johnson \cite[Thm. 1]{J71} and Figiel \cite[Prop. 3.1]{F} showed 
that for any $1 \le p \le \infty$
the spaces $C_p$ have the following compact factorisation property: 
given any Banach spaces $X, Y$ and compact operator $T \in \mathcal K(X,Y)$,
 there is a closed infinite-dimensional subspace $W \subset C_p$ as well as
 compact operators 
$A_0\in \mathcal K(X,W)$, $B_0 \in \mathcal K(W,Y)$ so that $T = B_0 A_0$.
Following Aron et al. \cite{ALRR} and \cite{MO} we consider the direct sum
\[
Z^p_{FJ} = \big( \oplus_{W} W)_{\ell^p},
\]
where $W$ runs through all the closed infinite-dimensional subspaces of 
$C_p$ in the summation. The
case $p = \infty$ is interpreted as a vector-valued $c_0$-type direct sum
(see below).

The space $Z^p_{FJ}$ is a Figiel-Johnson universal compact factorisation 
space for $1 \le p \le \infty$, since  $Z^p_{FJ}$ has the following universal property:
for any Banach spaces $X, Y$ and  $T \in \mathcal K(X,Y)$ there is 
$A \in \mathcal K(X,Z^p_{FJ})$ and $B\in \mathcal K(Z^p_{FJ},Y)$ so that $T = BA$.
In fact, if $T = B_0A_0$ factors compactly through some $W \subset C_p$ as above,
put $A = J_WA_0$ and $B = B_0P_W$, 
where $J_W: W \to Z^p_{FJ}$ is the inclusion map from  the $W$:th component, 
and $P_W: Z^p_{FJ} \to W$ is the corresponding canonical projection. 
Recall further, see \cite[p. 341]{J71}, that $C_p$ and $C_q$ are totally incomparable
spaces for $p \neq q$, that is, $C_p$ and $C_q$ do not contain any isomorphic
infinite-dimensional closed subspaces. Hence
$Z^p_{FJ}$ and $Z^q_{FJ}$ are not isomorphic  for $p \neq q$.

Let $\Gamma$ be an uncountable set. Recall that $(x_{\gamma}) \in c_0(\Gamma)$
if the set $\{\gamma \in \Gamma: \vert x_{\gamma}\vert > \alpha\}$ is finite for all
$\alpha > 0$. 
It is well known that 
$c_0(\Gamma)$ is a non-separable Banach space 
equipped with the supremum norm $\Vert \cdot \Vert_\infty$. 
We next show that the universal factorisation spaces $Z^p_{FJ}$ have a non-separable
quotient algebra $\A_{Z^p_{FJ}}$ for all $1 \le p \le \infty$.

\begin{theorem}\label{FJv1}
There is an uncountable set $\Gamma$ so that 
$c_0(\Gamma)$ embeds as a linear isomorphic subspace into 
$\A_{Z^p_{FJ}}$  for any $1 \le p \le \infty$.
\end{theorem}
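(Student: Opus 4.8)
The plan is to build an uncountable family of compact non-approximable operators on $Z^p_{FJ}$ whose mutual distances in the quotient algebra are uniformly bounded below, and then mimic the $c_0$-embedding argument of Proposition \ref{sum2}. The key structural input is that $Z^p_{FJ} = (\oplus_W W)_{\ell^p}$ is an $\ell^p$-sum (respectively $c_0$-sum) indexed by the uncountable collection $\Gamma$ of all closed infinite-dimensional subspaces $W \subset C_p$, and that infinitely many (indeed uncountably many) of these summands $W$ fail the A.P. while having enough structure to produce a fixed compact non-approximable operator on each. First I would observe that since $C_p$ has the A.P. but contains subspaces failing the A.P. — for instance any copy of the Enflo/Szankowski-type subspace $X \subset \ell^p$ from Corollary \ref{corsum3}, which embeds into $C_p$ by universality — there is at least one summand $W_0$ with $\A_{W_0} \neq \{0\}$. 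The crucial point is that there are \emph{uncountably many} such summands: any subspace isomorphic to $W_0$ appears in the indexing set $\Gamma$, and one can exhibit an uncountable almost-disjoint-type family of mutually "independent" subspaces all failing the A.P.

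With such a family $(W_\gamma)_{\gamma \in \Gamma}$ in hand, indexed by an uncountable $\Gamma$, I would for each $\gamma$ fix a normalised compact operator $S_\gamma \in \mathcal K(W_\gamma)$ with $dist(S_\gamma, \mathcal A(W_\gamma)) = 1$ and $\Vert S_\gamma \Vert \le 2$, exactly as in the normalisation \eqref{bd1}. Letting $P_\gamma: Z^p_{FJ} \to W_\gamma$ and $J_\gamma: W_\gamma \to Z^p_{FJ}$ be the canonical projection and inclusion for the $\gamma$:th summand, I set $U_\gamma = J_\gamma S_\gamma P_\gamma \in \mathcal K(Z^p_{FJ})$. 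Because the summands are genuinely disjoint in the $\ell^p$-sum one has $P_\gamma J_\delta = 0$ for $\gamma \neq \delta$, so $U_\gamma U_\delta = 0$ for $\gamma \neq \delta$ and $P_\gamma U_\delta J_\gamma = 0$ for $\delta \neq \gamma$. The distance estimate $dist(U_\gamma - U_\delta, \mathcal A(Z^p_{FJ})) \ge 1$ for $\gamma \neq \delta$ then follows by the same computation as in Proposition \ref{sum1}, using $P_\gamma V J_\gamma \in \mathcal A(W_\gamma)$ for any approximable $V$. The map $\psi: c_0(\Gamma) \to \A_{Z^p_{FJ}}$ sending $(a_\gamma) \mapsto (\sum_\gamma a_\gamma U_\gamma) + \mathcal A(Z^p_{FJ})$ is then defined and bounded above by the $\ell^p$/$c_0$-sum norm estimate (the analogue of conditions (iii)–(iv), which hold with constants $C = D = 1$ in a genuine $\ell^p$-sum), and bounded below by $(1/M)\Vert (a_\gamma)\Vert_\infty$ via the now-familiar $P_r(\cdot)J_r$ localisation at the index $r$ where the supremum is attained.

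The main obstacle, and the place where real care is needed, is the uncountability: I must genuinely produce an \emph{uncountable} index family $\Gamma$ of closed infinite-dimensional subspaces of $C_p$, each failing the A.P., and each occurring as a distinct summand $W$ in the definition of $Z^p_{FJ}$. The definition of $Z^p_{FJ}$ sums over \emph{all} closed infinite-dimensional subspaces $W \subset C_p$, so the indexing set is automatically uncountable (in fact of cardinality $2^{\aleph_0}$, since a single copy of a non-A.P. subspace already has uncountably many distinct closed subspaces that themselves fail the A.P.). Thus I would argue that there is an uncountable subcollection of summands each failing the A.P. — concretely, take one non-A.P. subspace $X_0 \subset C_p$ and note that it contains an uncountable family of distinct closed subspaces still failing the A.P. — and restrict $\Gamma$ to this subcollection. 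A secondary technical point for $p = \infty$ is to confirm that the vector-valued $c_0$-sum still satisfies the summability needed for $\psi$ to land in $\mathcal K(Z^\infty_{FJ})$; here one checks that for $(a_\gamma) \in c_0(\Gamma)$ the partial sums $\sum_\gamma a_\gamma U_\gamma$ form a Cauchy net of compact operators, exactly as in the $p = \infty$ case of Proposition \ref{sum2}. Once these are in place the embedding and its lower bound follow mechanically, and non-separability of $\A_{Z^p_{FJ}}$ is immediate from the non-separability of $c_0(\Gamma)$.
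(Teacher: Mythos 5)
Your second half --- the pairwise ``orthogonal'' operators $U_\gamma = J_\gamma S_\gamma P_\gamma$, convergence of $\sum_\gamma a_\gamma U_\gamma$ via the finite sets $\{\gamma : \vert a_\gamma\vert > 1/r\}$, the upper bound from the $\ell^p$-sum norm, and the lower bound by localising with $P_\beta(\cdot)J_\beta$ --- is sound and is essentially identical to the paper's argument. The genuine gap is in how you produce the uncountable family of summands, and it is exactly where the paper's main novelty lies. Throughout you substitute the property ``$W$ fails the A.P.'' for the property you actually need, namely $\mathcal A(W)\varsubsetneq\mathcal K(W)$, i.e.\ $\A_W\neq\{0\}$: to fix $S_\gamma\in\mathcal K(W_\gamma)$ with $dist(S_\gamma,\mathcal A(W_\gamma))=1$ you need a compact non-approximable operator on $W_\gamma$ \emph{itself}. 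Failure of the A.P.\ only yields (via Grothendieck's classical fact) a compact non-approximable operator \emph{into} $W_\gamma$ from an auxiliary space, and whether $\mathcal A(W)=\mathcal K(W)$ forces the A.P.\ is precisely the longstanding open problem recalled in Section \ref{intro} (\cite[Problem 1.e.9]{LT1}). Hence your assertion that a single non-A.P.\ subspace ``has uncountably many distinct closed subspaces that themselves fail the A.P.'' --- itself unproven, since the failure of A.P.\ does not obviously pass to enough subspaces --- would not suffice even if granted. The paper's Claim is engineered to avoid this trap: it factors a compact non-approximable $A\in\mathcal K(X,Y)$ \emph{twice} through subspaces $W,V\subset C_p$ (Johnson--Figiel), obtaining a compact non-approximable $S\colon W\to V$ and hence $\A_Z\neq\{0\}$ for $Z=W\oplus V\subset C_p\oplus C_p\cong C_p$; it then builds countably many \emph{distinct} superspaces $Z_n=Z+[x_1,\dots,x_n]$ carrying the compact non-approximable operators $T\oplus I_n$, and boosts to an uncountable family $Z_\gamma=\big(\oplus_{j}(Z^{(j)})_{n_j}\big)_{\ell^p}\subset\big(\oplus_k C_p\big)_{\ell^p}\cong C_p$, $\gamma=(n_j)\in\prod_j\N$, whose members have $\A_{Z_\gamma}\neq\{0\}$ because they contain $(Z^{(1)})_{n_1}$ complementably (Proposition \ref{dsum}).

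There is a second, related defect in your seed. $C_p$ is not universal for isomorphic embeddings of separable spaces; what is true (and what you implicitly use) is that $C_p$ contains an isomorphic copy of $\ell^p$ (respectively $c_0$), so Corollary \ref{corsum3} does supply one subspace $W_0\subset C_p$ with $\A_{W_0}\neq\{0\}$ --- but only for $p\neq 2$. Every closed subspace of $\ell^2$ has the A.P., Corollary \ref{corsum3} explicitly excludes $p=2$, and your route produces nothing inside $C_2$, whereas the theorem claims all $1\le p\le\infty$; the paper's factorisation argument covers $p=2$ uniformly because it starts from an \emph{arbitrary} pair $X,Y$ with $\mathcal A(X,Y)\varsubsetneq\mathcal K(X,Y)$ and never needs non-A.P.\ subspaces of $\ell^p$. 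For $p\neq 2$ your uncountability step could in fact be repaired in the spirit of your own seed: inside $W_0\cong\big(\oplus_{\N}(Y\oplus Z)\big)_{\ell^p}$ the subsums over infinite subsets $D\subset\N$ form an uncountable family of distinct closed subspaces, each containing $Y\oplus Z$ complementably and hence each with nontrivial quotient algebra by Proposition \ref{dsum} --- but some explicit construction of this kind must be given, and for $p=2$ you still need the paper's double-factorisation device.
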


\begin{proof}
We fix $p \in [1,\infty]$ for the duration of the argument. The main novelty of the argument 
is contained in the following claim.

\smallskip

\noindent \textit{Claim.} There is an uncountable family $\{Z_\gamma: \gamma \in \Gamma\}$
of distinct closed subspaces of $C_p$ such that $\A_{Z_{\gamma}} \neq \{0\}$
for all $\gamma \in \Gamma$. 

\smallskip

Observe first that there
is a closed subspace $Z \subset C_p$ such that $\A_Z \neq \{0\}$. 
In fact, consider Banach spaces $X$ and $Y$ such that 
$\mathcal A(X,Y) \varsubsetneq \mathcal K(X,Y)$.
The preceding Johnson-Figiel  factorisation  
applied to $A \in \mathcal K(X,Y)\setminus \mathcal A(X,Y)$ gives
a closed subspace $W \subset C_p$, as well as $B\in K(W,Y)$ and $C \in K(X,W)$,
so that $A=BC$.
Here $B$ cannot be an approximable operator,  since $A$ is not approximable.
Thus we may also factor $B\in K(W,Y)$ compactly through a closed subspace 
$V\subset C_p$.
This produces a compact non-approximable operator $S: W\to V$, so that 
$U = \begin{pmatrix} 0 & 0\\ S & 0\end{pmatrix} \notin \mathcal A(Z)$,
where $Z :=W\oplus V\subset C_p\oplus C_p\cong C_p$. 
Here $U(x,y) = (0,Sx)$ for $(x,y) \in Z$. Consequently  $\A_Z\neq\{0\}$.
 
We may thus fix $T \in \mathcal K(Z) \setminus \mathcal A(Z)$.
Next we find a sequence of distinct closed subspaces $Z_n$, where
$Z \subset Z_{n} \subset C_p$, as well as 
operators $T_n \in  \mathcal K(Z_n) \setminus \mathcal A(Z_n)$ for $n \in \mathbb N$.
In fact, since the subspace $Z \subset C_p$ does not have the A.P., the quotient space
 $C_p/Z$ is infinite-dimensional, and we may pick 
a normalised basic sequence $(x_n + Z)_{n \in \mathbb N}$ 
in  $C_p/Z$, see   e.g. \cite[Thm. 1.a.5]{LT1}.
Consider the closed linear subspace
 \[
 Z_n = Z + [x_k: 1 \le k \le n] \subset C_p, \quad n \in \mathbb N,
 \]
 where we use  
 $[D]$ to denote the closed linear span in $C_p$ of any subset $D \subset C_p$.
It is clear that $Z_n \neq Z_m$ whenever $n \neq m$,  
since the sequence $(x_k)$ is independent modulo $Z$. Moreover,
vectors  $x \in Z_n$ have a unique representation $x = z + \sum_{k=1}^n c_kx_k$,
 where $z \in Z$. Hence there is for any $n \in \mathbb N$  
 a bounded projection 
 $P_n: Z_n \to Z$, which is defined by $P_nx = z$ for 
 $ x = z + \sum_{k=1}^n c_kx_k \in Z_n$.
We next define the linear map $T_n$ in $Z_n$ by 
  \[
 T_nx = Tz + \sum_{k=1}^n c_kx_k, \quad x = z + \sum_{k=1}^n c_kx_k \in Z_n,
 \]
for  $n \in \mathbb N$.
One may identify 
$T_n = \begin{pmatrix} T & 0 \\ 0 & I_n\end{pmatrix}$ with respect to the 
direct sum decomposition
$Z_n = Z \oplus  [x_k: 1 \le k \le n]$, where
$I_n$ is the associated identity map.
It follows that $T_n: Z_n \to Z_n$ is a 
compact non-approximable operator,
since $T \in \mathcal K(Z) \setminus \mathcal A(Z)$. 
Hence  $\A_{Z_n} \neq \{0\}$ for each $n \in \mathbb N$. 
  
Recall from \cite[p. 341]{J71} that 
$C_p\cong \big(\bigoplus_{k=1}^\infty C_p\big)_{\ell^p}$,
so that the above construction can be applied coordinatewise in each summand.
This gives an uncountable  family  $\{Z_\gamma: \gamma \in \Gamma\}$ 
of closed subspaces of $C_p$, where 
\[
Z_\gamma = \big(\oplus_{j \in \mathbb N} (Z^{(j)})_{n_j} \big)_{\ell^p} \subset  
\big(\bigoplus_{k=1}^\infty C_p\big)_{\ell^p} \cong C_p
\]
and $\gamma = (n_j)_{j \in \mathbb N} \in \Gamma := \prod_{j \in \mathbb N} \mathbb N$.
Here $Z^{(j)}$ denotes an isomorphic  copy of $Z$ in the $j$:th summand $C_p$ 
in the above direct sum. 
To check that $Z_{\gamma} \neq Z_{\gamma'}$
whenever $\gamma \neq \gamma'$, note that if
$\gamma \neq \gamma'$, where $\gamma = (n_j)$ and  $\gamma' = (m_j)$, then
$n_j \neq m_j$ for some $j \in \mathbb N$. Thus  the respective $j$:th component spaces
satisfy $(Z^{(j)})_{n_j} \neq (Z^{(j)})_{m_j}$, whence $Z_{\gamma} \neq Z_{\gamma'}$.
Moreover,  $(Z^{(1)})_{n_1}\subset Z_\gamma$
is a complemented subspace,
so it follows from Proposition \ref{dsum}
that $\A_{Z_\gamma} \neq \{0\}$ for any $\gamma \in \Gamma$. 
This finishes the verification of the Claim.

To complete the argument we consider  the complemented subspace 
$Y = \big(\oplus_{\gamma \in \Gamma} Z_\gamma\big)_{\ell^p}$ of $Z^p_{FJ}$. 
By Proposition \ref{dsum} it suffices to find  a linear isomorphic 
embedding $c_0(\Gamma) \to \A_Y$.
This step is a modification of the corresponding argument in 
Proposition \ref{sum2}.
Firstly, for each $\gamma \in \Gamma$ pick  a compact operator 
$T_{\gamma} \in \mathcal K(Z_\gamma)$ so that 
\[
dist(T_{\gamma},\mathcal A(Z_\gamma)) =1, \quad \Vert T_{\gamma}\Vert < 2.
\]
Let $U_\gamma = J_\gamma T_{\gamma} P_\gamma$, where 
$J_\gamma: Z_\gamma \to Y$ is the natural inclusion map and 
$P_\gamma: Y \to Z_\gamma$ the natural projection for $\gamma \in \Gamma$. 
Then the series
$\sum_{\gamma \in \Gamma} a_\gamma U_\gamma$ defines a compact operator 
$Y \to Y$, and 
\begin{equation}\label{bound}
\Vert \sum_{\gamma \in \Gamma} a_\gamma U_\gamma \Vert \le
2 \cdot \sup_{\gamma} \vert a_\gamma \vert,
\end{equation}
for any $(a_\gamma) \in c_0(\Gamma)$ .
In fact, consider the finite sets $\Gamma_r = \{\gamma \in \Gamma: \vert a_{\gamma}\vert > 1/r\}$
and the associated compact operator
$V_r = \sum_{\gamma \in \Gamma_r}   a_\gamma U_\gamma$ 
on $Y$ for $r \in \mathbb N$. It follows that $(V_r) \subset \mathcal K(Y)$ is a
Cauchy-sequence, since
\[
\Vert V_{r+s} - V_r \Vert =  
\Vert \sum_{\gamma \in \Gamma_{r+s} \setminus \Gamma_r}   a_\gamma U_\gamma \Vert
\le 2/r
\]
for each $r, s \in \mathbb N$. Thus $\sum_{\gamma \in \Gamma} a_\gamma U_\gamma
= \lim_{r\to \infty} V_r$ defines a compact operator on $Y$.
To check \eqref{bound}, put 
$\Gamma_0 = \cup_{r \in \mathbb N} \Gamma_r$
and let $x = (x_\gamma) \in Y$.  It follows that 
\[
\Vert \sum_{\gamma \in \Gamma} a_\gamma U_\gamma x \Vert_{\ell^p} 
=  \Vert \sum_{\gamma \in \Gamma_0} a_\gamma J_\gamma T_\gamma x_\gamma \Vert_{\ell^p}
\le 2 \cdot \sup_{\gamma} \vert a_\gamma \vert \cdot \Vert x \Vert_{\ell^p}.
\]

Finally, suppose that $(a_\gamma) \in c_0(\Gamma)$ and 
$\vert a_\beta \vert = \Vert (a_\gamma)\Vert_\infty$ for some $\beta \in \Gamma$.
If $V \in \mathcal A(Y)$ is arbitrary, then
\[
\Vert \sum_{\gamma \in \Gamma} a_\gamma U_\gamma - V\Vert
\ge \Vert P_\beta \big(\sum_{\gamma \in \Gamma} a_\gamma U_\gamma - V)J_\beta\Vert
= \Vert a_\beta T_\beta - P_\beta V J_\beta\Vert \ge \vert a_\beta\vert,
\]
since $U_\gamma J_\beta = 0$ for $\gamma \neq \beta$ by construction.

This completes the proof  that $c_0(\Gamma)$ embeds into $\A_Y$, and 
consequently of the theorem, after an application of Proposition \ref{dsum}.
\end{proof}

\section{Concluding remarks}\label{conclude}

We remind that it seems to be unknown whether $\A_P \neq \{0\}$,
where $P$ belongs to the family of spaces  constructed by 
Pisier \cite{P1}, \cite[section 10]{P2}. The spaces  $P$ fail the A.P. 
and $\mathcal{A}(P) = \mathcal N(P)$, where $\mathcal N(P)$ 
denotes  the space of  nuclear operators $P \to P$. Moreover, 
$\mathcal{K}(P,P^*) = \mathcal A(P, P^*) = \mathcal N(P, P^*)$ by \cite{Jo}. 

Argyros and Haydon \cite{AH} constructed  Banach spaces $X_{AH}$ having a 
Schauder basis such that the Calkin algebra $\mathcal L(X_{AH})/\mathcal K(X_{AH})$ 
is one-dimensional. Their construction was subsequently modified by Tarbard \cite{Td}
to obtain certain finite-dimensional Calkin algebras. Dales \cite{D13} noted 
that it remains unclear whether 
there is a similar phenomenon for the compact-by-approximable algebras, that is, 
whether there are Banach 
spaces $X$ failing the A.P., such that $\A_X \neq \{0\}$ is a finite-dimensional (radical) algebra.
Note that if such a space $X$ exists, then $X$ cannot have the B.C.A.P. by 
Proposition \ref{bcapfact1}.(ii).

It also appears unknown whether there is a Banach space $X$ such that $\mathcal A(X)$
is separable (that is, $X^*$ is separable), but $\mathcal K(X)$ is non-separable.

\medskip

 \textit{Acknowledgements:}
We are grateful to Garth Dales for communicating \cite{D13} and for some subsequent discussions.
This paper is part of the  Ph.D.-thesis of Henrik Wirzenius under the 
supervision of the first author. H.W. gratefully acknowledges the financial support 
of The Swedish Cultural Foundation in Finland and the Magnus Ehrnrooth Foundation.

\bibliographystyle{amsalpha}
\bibliography{Tylli_Wirzenius_bib}

\end{document}